\newcommand{\BlackBox}{\rule{1.5ex}{1.5ex}}
\newenvironment{proof}{\par\noindent{\bf Proof\ }}{\hfill\BlackBox\\[2mm]}
\newtheorem{assumption}{Assumption}
\DeclareMathOperator*{\argmin}{arg\,min}
\newtheorem{definition}{Definition}[section]
\newtheorem{lemma}{Lemma}[section]
\newtheorem{corollary}{Corollary}[section]
\newtheorem{theorem}{Theorem}[section]
\newcommand{\beq}{\begin{equation}}
\newcommand{\eeq}{\end{equation}}
\newcommand{\beqa}{\begin{eqnarray}}
\newcommand{\eeqa}{\end{eqnarray}}
\newcommand{\beqas}{\begin{eqnarray*}}
\newcommand{\eeqas}{\end{eqnarray*}}
\newcommand{\bi}{\begin{itemize}}
\newcommand{\ei}{\end{itemize}}
\newcommand{\ba}{\begin{array}}
\newcommand{\ea}{\end{array}}
\newcommand{\nn}{\nonumber}
\def\eqnok#1{(\ref{#1})}
\def\vgap{\vspace*{.1in}}
\newcommand{\bbe}{\Bbb{E}}
\newcommand{\bbr}{\Bbb{R}}
\def\w{\omega}
\title {Approximation Methods for Bilevel Programming}
\author{Saeed Ghadimi\thanks{Department of Operations Research and Financial Engineering, Princeton University, Princeton, NJ; email: sghadimi@princeton.edu, corresponding author.}
\and
Mengdi Wang\thanks{Department of Operations Research and Financial Engineering, Princeton University, Princeton, NJ; email:
mengdiw@princeton.edu.}
}
\begin{document}
\maketitle

\begin{abstract}
In this paper, we study a class of bilevel programming problem where the inner objective function is strongly convex. More specifically, under some mile assumptions on the partial derivatives of both inner and outer objective functions, we present an approximation algorithm for solving this class of problem and provide its finite-time convergence analysis under different convexity assumption on the outer objective function. We also present an accelerated variant of this method which improves the rate of convergence under convexity assumption. Furthermore, we generalize our results under stochastic setting where only noisy information of both objective functions is available. To the best of our knowledge, this is the first time that such (stochastic) approximation algorithms with established iteration complexity (sample complexity) are provided for bilevel programming.
\end{abstract}

\section{Introduction}
We focus on the algorithms and complexity of bilevel programming problem given by
\begin{align} \label{main_prob}
&\min_{x \in X} f(x;y^*(x)) \nn \\
& \text{s.t.} \ \  y^*(x) = \argmin_{y \in \bbr^m} g(x,y)
\end{align}
where $f$ and $g$ are continuously differentiable functions and $X \subseteq \bbr^n$ is a closed convex set. In the above problem, to minimize the outer (upper-level) function $f$ with respect to (w.r.t) $x$, one needs to first minimize the inner (lower-level) function $g$ w.r.t its corresponding decision variable $y$ which depends on the outer variable $x$. This makes problem \eqnok{main_prob} intrinsically hard to solve. This class of problems was first introduced by Bracken and McGill in 70's \cite{BraMcG73}. Later, a more general form of problem \eqnok{main_prob} involving joint constraints of outer and inner variables was considered in \cite{AiyShi81}. Generally, there are a few well-known approaches for solving bilevel optimization problems. The first one is to reduce the bilevel problem to a single level problem by replacing the inner optimization problem with its optimality conditions as constraints (see e.g., \cite{HaBrGi92,ShiLuZha05}). However, this approach has two major drawbacks. If the inner problem is large-scale, then the reduced problem will have too many constraints. Also, unless $g$ has a special structure like the quadratic form, its optimality conditions typically bring nonconvexity into the feasible set of the reduced problem. Moreover, the complementarity constraints are combinatorial in nature, which lead to a mixed integer programming problem.

The second approach is to use iterative algorithms for directly solving bilevel optimization problems. Examples include descent methods using approximate descent feasible directions (see e.g., \cite{KolLas90, Falk95}), penalty methods solving penalized inner objective function (see e.g., \cite{AiyShi81, Case98}), and trust-region methods with mixed integer from of subproblems (see e.g., \cite{MarSavZh01,ColMarSav05}). Two surveys of existing works can be found in \cite{ColMarSav07,SinMalDeb17}.

To the best of our knowledge, none of the existing works provide complexity results i.e., finite-time convergence of their algorithms. In this paper, we focus on developing faster methods and provide their convergence/complexity analysis. {\color{black} To do so}, we need to make some smoothness assumptions about functions $f$, $g$, and their partial derivatives. Generally, the smoothness assumption is defined as follows.

\begin{definition}
A function $h:\bbr^{n_1} \mapsto \bbr^{n_2 \times n_3}$ is Lipschitz continuous with constant $L_h$ if
\beq \label{smoothness}
\|h(z_1) - h(z_2)\| \le L_h \|z_1-z_2\| \quad \forall z_1,z_2 \in \bbr^n,
\eeq
where $\|\cdot\|$ denotes Euclidean norm of a vector or matrix depending on the value of $n_3$.
\end{definition}

We now present two sets of assumptions on objective functions of the outer and inner problems in the form of \eqnok{main_prob}.

\begin{assumption}\label{f_assumption}
Function $f$ has the following properties.
\begin{itemize}

\item[a)] For any $\bar x \in X$, $\nabla_x f(\bar x;y)$ and $\nabla_y f(\bar x;y)$ are Lipschitz continuous (w.r.t $y$) with constants $L_{f_x}>0$ and $L_{f_y}>0$.

\item [b)] For any $\bar x \in X$ and $\bar y \in \bbr^m$, we have $\|\nabla_y f(\bar x;\bar y)\| \le C_{f_y}$ for some $C_{f_y}>0$.

\item[c)] For any $\bar y \in \bbr^m$, $\nabla_y f(x;\bar y)$ is Lipschitz continuous (w.r.t $x$) with constant $\bar L_{f_y}>0$.
\end{itemize}

\end{assumption}

\begin{assumption}\label{g_assumption}
Function $g$ has the following properties.
\begin{itemize}
\item [a)] For any $x \in X$ and $y \in \bbr^m$, $g$ is continuously twice differentiable in $(x,y)$.

\item [b)] For any $\bar x \in X$, $\nabla_y g(\bar x,y)$ is Lipschitz continuous (w.r.t $y$) with constant $L_g>0$.

\item [c)] For any $\bar x \in X$, $g(\bar x,y)$ is strongly convex w.r.t $y$ with parameter $\mu_g>0$ i.e., $\mu_g I \preceq \nabla_y^2 g(\bar x,y)$.

\item [d)] For any $\bar x \in X$, $\nabla_{xy}^2 g(\bar x,y)$ and $\nabla_{yy}^2 g(\bar x,y)$ are Lipschitz continuous (w.r.t $y$) with constants $L_{g_{xy}}>0$ and $L_{g_{yy}}>0$.
\item [e)] For any $\bar x \in X$ and $\bar y \in \bbr^m$, we have $\|\nabla_{xy}^2 g(\bar x,\bar y)\| \le C_{g_{xy}}$ for some constant $C_{g_{xy}}>0$.
\item [f)] For any $\bar y \in \bbr^m$, $\nabla_{xy}^2 g(x,\bar y)$ and $\nabla_{yy}^2 g(x,\bar y)$ are Lipschitz continuous (w.r.t $x$) with constants $\bar L_{g_{xy}}>0$ and $\bar L_{g_{yy}}>0$.
\end{itemize}

\end{assumption}

\vgap

We also consider the stochastic bilevel optimization problem which is a variant of problem \eqnok{main_prob} taking the form
\begin{align} \label{main_prob_st}
&\min_{x \in X} f(x;y^*(x))=\bbe[F(x;y^*(x),\xi)] \nn \\
& \text{s.t.} \ \  y^*(x) = \argmin_{y \in \bbr^m} g(x,y)= \bbe[G(x,y,\zeta)],
\end{align}
where expectations are taken with respect to independent random vectors $\xi, \zeta$ whose probability distributions are supported on $\Xi \subset \bbr^{d_1}$ and $\Omega \subset \bbr^{d_2}$, respectively. {\color{black}Since the above expectations are analytically hard to compute when $d_1,d_2 \ge 5$}, we assume that two stochastic oracles are available for each expectation such that given $(x,y)$ as an input, they output noisy partial first-order derivatives of $f$, $g$, and second-order derivatives of $g$. In particular, we make the following assumption on the output of the oracles.
\begin{assumption}\label{stochastic_assumption}
For any given $(x,y) \in \bbr^{n \times m}$, the stochastic oracles output $\nabla_x F(x;y,\xi)$, $\nabla_y F(x;y,\xi)$, $\nabla_y G(x,y,\zeta^{(1)})$, $\nabla^2_{xy} G(x,y,\zeta^{(2)})$, and $\nabla^2_{yy} G(x,y,\zeta^{(3)})$ such that
\begin{itemize}
\item [a)] $\xi,\zeta^{(1)},\zeta^{(2)}$, and $\zeta^{(3)}$ are independent random vectors.

\item [b)] $\bbe[\nabla_x F(x;y,\xi)] = \nabla_x f(x;y)$ and $\bbe[\|\nabla_x F(x;y,\xi)-\nabla_x f(x;y)\|^2] \le \sigma^2_x$.

\item [c)] $\bbe[\nabla_y F(x;y,\xi)] = \nabla_y f(x;y)$ and $\bbe[\|\nabla_y F(x;y,\xi)-\nabla_y f(x;y)\|^2] \le \sigma^2_y$.

\item [d)] $\bbe[\nabla_y G(x,y,\zeta^{(1)})] = \nabla_y g(x,y)$ and $\bbe[\|\nabla_y G(x,y,\zeta^{(1)})-\nabla_y g(x,y)\|^2] \le \sigma^2_{g_y}$.

\item [e)] $\bbe[\nabla^2_{xy} G(x,y,\zeta^{(2)})] = \nabla^2_{xy} g(x,y)$ and $\bbe[\|\nabla^2_{xy} G(x,y,\zeta^{(2)})-\nabla^2_{xy} g(x,y)\|^2] \le \sigma^2_{g_{xy}}$.

\item [f)] $\bbe[\nabla^2_{yy} G(x,y,\zeta^{(3)})] = \nabla^2_{yy} g(x,y)$ and $\bbe[\|\nabla^2_{yy} G(x,y,\zeta^{(3)})-\nabla^2_{yy} g(x,y)\|^2] \le \sigma^2_{g_{yy}}$.
\end{itemize}

\end{assumption}

\vgap

Several iterative algorithms have been also proposed for solving bilevel problems when objective functions are given in the forms of finite sum of many functions (see e.g., \cite{CouWan16,CouWan15}). {\color{black}These works consider specific examples of the objective function in \eqnok{main_prob}, so that they can be reduces and solved by variants of the classical stochastic approximation method. Asymptotic convergence of these methods are established under certain stepsize policies. However, no finite-sample convergence analysis is provided for the general case.}

{\color{black}We should point out that Assumptions~\ref{f_assumption} and \ref{g_assumption} will be made throughout the paper either for functions $f$ and $g$ (in the case of problem \eqnok{main_prob}) or for $F$ and $G$ (in the case of problem \eqnok{main_prob_st}). In the latter case, Assumption~\ref{stochastic_assumption} will be also made.

We now present some examples of bilevel optimization.}\\
$\bullet$ {\color{black}{\bf Model selection and parameter tuning}.} The first example is the selection of model parameters in learning problems. More specifically, assume that a training data set ${\cal T} = \{(x_i,y_i)\}_{i=1}^{T}$ is available to find a predictor which classifies the data points into two groups and has the least error over a validation set ${\cal V} = \{(x_j,y_j)\}_{j=1}^{V}$. {\color{black} Machine learning practitioners often formulate the empirical risk minimization problem given by }
\[
\min_\theta \left\{\frac{\lambda}{T}\sum_{i=1}^T l_i (\theta,x_i,y_i)+R(\theta)\right\},
\]
where $l_i$ is a loss function, $R(\theta)$ is a (strongly) convex regularization term, and $\lambda>0$ is a regularization parameter. The role of $\lambda$ is to balance the loss-regularization trade-off, in order to avoid overfitting the predictor over the training data set. The right choice of $\lambda$ is not known. Training the model for different values of $\lambda$ and choosing the one which has the least loss function over the validation set is a common practice. This approach requires making multiple full passes over the training and validation data sets, which is computationally expensive or even prohibited for large scale problems. Instead, {\color{black}one can reformulate the above learning problem as the following bilevel optimization problem (see e.g., \cite{KuBeHuPa06}).}
\begin{align}
&\min_{\lambda \in [0,\lambda_{\max}]} f(\lambda;\theta) = \frac{1}{V} \sum_{j=1}^V l_j (\theta(\lambda),x_j,y_j)\nn \\
& \text{s.t.} \ \  \theta(\lambda) = \argmin_{\theta \in \bbr^n} g(\lambda,\theta)= \frac{\lambda}{T} \sum_{i=1}^T l_i (\theta,x_i,y_i)+R(\theta).\nn
\end{align}
If the loss function has bounded gradient and Hessian, like the logistic function, the above $f$ and $g$ satisfy Assumptions~\ref{f_assumption} and \ref{g_assumption}.

$\bullet$ {\bf Stackelberg game model.} The second example is related to the Stackelberg model of multi-firm competition. In particular, assume that there are $N_l$ and $N_f$ firms acting as leaders and followers, respectively. Each firm wants to maximize its own profit by choosing the best amount of production. The set of leaders first act simultaneously and non-cooperatively, then the set of followers choose their own decisions. {\color{black}Denoting the production levels chosen by the leaders and followers by $x$ and $y$, respectively, the model can be formulated as
\begin{align}
&\min_{x \in \bbr^{N_l}} f(x;y) = \sum_{i=1}^{N_l} f_i(x,\bar y(x))\nn \\
& \text{s.t.} \ \  \bar y(x) = \argmin_{y \in \bbr^{N_f}} g(x,y)= \sum_{j=1}^{N_f} g_j(x,y),\nn
\end{align}
where $f_i$ and $g_j$ are the negative profit functions for the $i$-th leader and the $j$-th follower, respectively.} Some well-known models can be chosen for the profit functions like the quadratic ones which make each $g_i$ strongly convex with respect to $y_j$ and satisfy our assumption on $f$ and $g$.

{\color{black}For the sake of simplicity, we define the following quantities representing complexity of finding an $\epsilon$-optimal solution (or an $\epsilon$-stationary solution) of the bilevel problem \eqnok{main_prob} i.e., $\bar x \in X$ such that $f(\bar x, y^*(\bar x)) - f^* \le \epsilon$ (or $\|\nabla f(\bar x, y^*(\bar x))\|^2 \le \epsilon$ when $f$ is possibly nonconvex and $X=\bbr^n$). In the case of problem \eqnok{main_prob_st}, we consider the optimality errors as $\bbe[f(\bar x, y^*(\bar x))]-f^*$ or $\bbe[\|\nabla f(\bar x, y^*(\bar x))\|^2]$.}

\begin{definition} \label{def_complex}
Complexity notations.\\
\begin{itemize}
\item [a)] $GC(f,\epsilon)$ ($SGC(f,\epsilon)$) denotes the total number of partial (stochastic) gradients of $f$ required to find an $\epsilon$ solution of problem \eqnok{main_prob} (\eqnok{main_prob_st}).

\item [b)] $GC(g,\epsilon)$ ($SGC(g,\epsilon)$) denotes the total number of partial (stochastic) gradients of $g$ required to find an $\epsilon$ solution of problem \eqnok{main_prob} (\eqnok{main_prob_st}).

\item [c)] $HC(g,\epsilon)$ ($SHC(g,\epsilon)$) denotes the total number of partial (stochastic) Hessians of $g$ required to find an $\epsilon$ solution of problem \eqnok{main_prob} (\eqnok{main_prob_st}).

\end{itemize}
\end{definition}

Our contributions are the following.
\begin{itemize}
\item [1)] First, we present an approximation algorithm for solving problem \eqnok{main_prob} and show that when both $f$ and $g$ are strongly convex, then this algorithm exhibits the following complexities.
\[
GC(f,\epsilon) = HC(g,\epsilon) = {\cal O} \left(\log \frac{1}{\epsilon}\right), \qquad GC(g,\epsilon)= {\cal O} \left(\log^2 \frac{1}{\epsilon}\right).
\]
To the best of our knowledge, this is the first complexity result of an iterative algorithm for solving bilevel programming. When $f$ is only convex, the above complexity bounds are changed to
\[
GC(f,\epsilon) = HC(g,\epsilon) = {\cal O} \left(\frac{1}{\epsilon}\right), \qquad GC(g,\epsilon)= {\cal O} \left(\frac{1}{\epsilon^{\tfrac54}}\right).
\]
Also, when $f$ is possibly nonconvex, this algorithm achieves the following bounds.
\[
GC(f,\epsilon) = HC(g,\epsilon) = {\cal O} \left(\frac{1}{\epsilon}\right), \qquad GC(g,\epsilon)= {\cal O} \left(\frac{1}{\epsilon^{\tfrac54}}\right).
\]

\item [2)] Second, we present an accelerated variant of the above method to further improve the aforementioned complexity bounds when $f$ is convex. In this case, our method possesses the complexity bounds of
\[
GC(f,\epsilon) = HC(g,\epsilon) = {\cal O} \left(\frac{1}{\sqrt{\epsilon}}\right), \qquad GC(g,\epsilon)= {\cal O} \left(\frac{1}{\epsilon^{\tfrac34}}\right).
\]
which are better than the aforementioned ones. This acceleration scheme does not change the complexity bounds when $f$ is strongly convex or possibly nonconvex.

\item [3)] Third, we present a stochastic variant of our algorithm for solving problem \eqnok{main_prob_st} and show that the complexity of this algorithm to find an $\epsilon$ solution of this problem, when both $f$ and $g$ are strongly convex, is bounded by
\[
SGC(f,\epsilon) = {\cal O} \left(\frac{1}{\epsilon}\right), \qquad SGC(g,\epsilon)= {\cal O} \left(\frac{1}{\epsilon^2}\right), \qquad
SHC(g,\epsilon) = {\cal O} \left(\frac{1}{\epsilon}\log \frac{1}{\epsilon}\right).
\]
When $f$ is only convex, the above bounds are changed to
\[
SGC(f,\epsilon) = {\cal O} \left(\frac{1}{\epsilon^2}\right), \qquad SGC(g,\epsilon)= {\cal O} \left(\frac{1}{\epsilon^4}\right), \qquad
SHC(g,\epsilon) = {\cal O} \left(\frac{1}{\epsilon^2}\log \frac{1}{\epsilon}\right).
\]
If $f$ is possibly nonconvex, then this algorithm exhibits the following complexity bounds.
\[
SGC(f,\epsilon) = {\cal O} \left(\frac{1}{\epsilon^2}\right), \qquad SGC(g,\epsilon)= {\cal O} \left(\frac{1}{\epsilon^3}\right), \qquad
SHC(g,\epsilon) = {\cal O} \left(\frac{1}{\epsilon^2}\log \frac{1}{\epsilon}\right).
\]
\end{itemize}

Rest of the paper is organized as follows. In Section~\ref{gbd_section}, we present an approximation method and its accelerated variant together with their convergence analysis for solving problem \eqnok{main_prob}. In Section~\ref{sab_section}, we generalize our results for solving problem the stochastic optimization problem given in \eqnok{main_prob_st}.
Some concluding remarks are also presented in Section~\ref{concld_section}.

{\bf Notation.}
{\color{black}For a differentiable function $h(x,y): \bbr^{n_1 \times n_2} \mapsto \bbr$ in which $y \equiv y(x): \bbr^{n_1} \mapsto \bbr^{n_2}$, we denote its partial derivatives w.r.t $x$ and $y$ by $\nabla_x h$ and $\nabla_y h$, respectively. Moreover, $\nabla h$ is used to show the gradient of $h$ as a function of $x$.} We use $D_X$ as the diameter of the feasible set whenever it is assumed to be bounded i.e., $D_X = \max_{x,y \in X} \|x-y\|$. $f^*$ also denotes the optimal value of the outer objective function in problem \eqnok{main_prob}.


\section{Deterministic Approximation Methods for Bilevel Programming}\label{gbd_section}
In this section, we present algorithms for solving problem \eqnok{main_prob} when exact information of the objective functions are available. {\color{black}In Subsection~\ref{BA_sec}, we provide a deterministic approximation method and its convergence analysis for solving the bilevel programming problem. We also present an accelerated variant of this method in Subsection~\ref{ABA_sec} and show that it possesses better complexity results when the outer objective function in the bilevel programming problem is convex.

\subsection{The Bilevel Approximation Method}\label{BA_sec}
To provide an iterative algorithm for solving problem \eqnok{main_prob}, we typically need to compute the gradient of $f$ at a given point $x \in X$ which requires knowing $y^*(x)$. However, $y^*(x)$ is not available unless the inner problem has a closed-form solution which is only possible for very specific choices of function $g$. Therefore, we assume that for any $x \in X$, we have an approximation of $y^*(x)$ which is used to estimate the gradient of $f$ at $x$. More specifically, for any $\bar x \in X$ and $\bar y \in \bbr^m$, we define the gradient approximation of $f$ as
\beq
\bar \nabla f(\bar x;\bar y) := \nabla_x f(\bar x;\bar y)- M(\bar x, \bar y)\nabla_y f(\bar x;\bar y), \quad \text{where} \quad M(\bar x, \bar y) := \nabla_{xy}^2 g(\bar x,\bar y)\left[\nabla_{yy}^2 g(\bar x,\bar y)\right]^{-1}.\label{grad_f}
\eeq
We discuss the above definition in more details after formally presenting our first approximation method.}

\begin{algorithm} [H]
	\caption{The Bilevel Approximation (BA) Method}
	\label{alg_BG}
	\begin{algorithmic}

\STATE Input:
$x_0 \in X$, $y_0 \in \bbr^m$ nonnegative sequences $\{\alpha_k\}_{k \ge 0}$, $\{\beta_t\}_{t \ge 0}$, and integer sequence $\{t_k\}_{k \ge 0}$.

\STATE Set $k=0$ and $\bar y_0 =y_0$.

{\bf For $k=0,1,\ldots$:}

\vgap

{\addtolength{\leftskip}{0.2in}

{\bf For $t=0,1,\ldots, t_k-1$:}

}
\vgap

{\addtolength{\leftskip}{0.4in}

\STATE Set
\beq \label{def_yt}
y_{t+1} = y_t- \beta_t \nabla_y g(x_k,y_t).
\eeq

}

{\addtolength{\leftskip}{0.2in}
{\bf End}
{\color{black}
\STATE  Set $\bar y_k=y_{t_k}$ and
\beq \label{def_xk}
x_{k+1} = \arg\min_{u \in X} \left\{\langle \bar \nabla f(x_k;\bar y_k),u \rangle + \frac{1}{2 \alpha_k}\|u-x_k\|^2 \right\},
\eeq
where $\bar \nabla f$ is defined in \eqnok{grad_f}.
}

}
{\bf End}
	\end{algorithmic}
\end{algorithm}

We now add a few remarks about the above algorithm. First, note that Algorithm~\ref{alg_BG} consists of two iterative loops. The outer loop indexed by $k$, counts the number of inexact projected gradient performed on function $f$ in \eqnok{main_prob} over the feasible set of the outer variable $x \in X$. The inner one indexed by $t$, shows steps of the gradient method with respect to the inner variable and function $y$ and $g$, respectively. Second, the number of iterations of the inner loop plays a key role in the convergence analysis of the above algorithm and it should be specified at each iteration of the outer loop. In particular, the larger this number is, the more accurate one solves the inner minimization problem in \eqnok{main_prob}. On the other hand, accuracy of the output solution from the inner loop affects the total complexity of Algorithm~\ref{alg_BG}. We will discuss this issue later. Finally, note that the error in gradient approximation of $f$ defined in \eqnok{grad_f} should be controlled appropriately to enable us providing convergence analysis of Algorithm~\ref{alg_BG}. In the next couple of technical results, we show that how this approximation relates to the true gradient of $f$ and its error can be controlled by the solution of the inner loop in the above algorithm.

\begin{lemma} Suppose that Assumptions~\ref{g_assumption}.a) and .b) hold.
\begin{itemize}
\item [a)] For any $\bar x \in X$, $y^*(\bar x)$ is unique and differentiable and we have
\beq\label{grad_ystar}
\nabla y^*(\bar x) = - M(\bar x, y^*(\bar x))^\top,
\eeq
where matrix $M$ is defined in \eqnok{grad_f}.

\item [b)] For any $\bar x \in X$, gradient of $f$ as a function of $x$, is given by
{\color{black}
\beq\label{grad_f2}
\nabla f(\bar x; y^*(\bar x)) = \nabla_x f(\bar x; y^*(\bar x))- M(\bar x, y^*(\bar x)) \nabla_y f(\bar x; y^*(\bar x)).
\eeq
}

\end{itemize}
\end{lemma}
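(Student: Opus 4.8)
The plan is to get part a) from the implicit function theorem and part b) from the chain rule. First I would invoke strong convexity of $g(\bar x,\cdot)$ (Assumption~\ref{g_assumption}.c), which both makes the unconstrained minimizer $y^*(\bar x)$ unique and forces $\nabla_{yy}^2 g(\bar x, y^*(\bar x)) \succeq \mu_g I$ to be invertible; this is where uniqueness and invertibility genuinely come from, even though the statement cites parts a) and b). Since the inner problem minimizes over all of $\bbr^m$, optimality reduces to the stationarity condition $\nabla_y g(\bar x, y^*(\bar x)) = 0$, valid for every $\bar x \in X$.

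Next I would apply the implicit function theorem to the map $\Phi(x,y) := \nabla_y g(x,y)$. Twice-continuous-differentiability of $g$ (Assumption~\ref{g_assumption}.a) makes $\Phi$ continuously differentiable, and its $y$-Jacobian $\nabla_{yy}^2 g$ is invertible by the previous step, so the theorem gives local $C^1$ dependence of $y^*$ on $x$. Differentiating the identity $\nabla_y g(x, y^*(x)) = 0$ in $x$ yields $\nabla_{yx}^2 g + \nabla_{yy}^2 g\,(\partial y^*/\partial x) = 0$, and hence $\partial y^*/\partial x = -[\nabla_{yy}^2 g]^{-1}\nabla_{yx}^2 g$, all evaluated at $(\bar x, y^*(\bar x))$. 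The remaining step is transpose bookkeeping: since $g$ is $C^2$, the block $\nabla_{yy}^2 g$ is symmetric (so is its inverse) and $\nabla_{yx}^2 g = (\nabla_{xy}^2 g)^\top$, which gives $M^\top = ([\nabla_{yy}^2 g]^{-1})^\top (\nabla_{xy}^2 g)^\top = [\nabla_{yy}^2 g]^{-1}\nabla_{yx}^2 g$. Thus $\partial y^*/\partial x = -M^\top$, and under the convention that $\nabla y^*$ denotes the Jacobian $\partial y^*/\partial x$, this is exactly \eqnok{grad_ystar}.

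For part b) I would set $\phi(x) := f(x, y^*(x))$ and differentiate by the chain rule, obtaining $\nabla \phi(\bar x) = \nabla_x f + (\partial y^*/\partial x)^\top \nabla_y f$, with everything evaluated at $(\bar x, y^*(\bar x))$. Substituting $\partial y^*/\partial x = -M^\top$ from part a) gives $(\partial y^*/\partial x)^\top = -M$, so $\nabla \phi(\bar x) = \nabla_x f - M \nabla_y f$, which is precisely \eqnok{grad_f2}.

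The only real obstacle is keeping the matrix-dimension and transpose conventions consistent throughout — in particular whether $\nabla y^*$ means the $m \times n$ Jacobian or its transpose, and ensuring that the symmetry of $\nabla_{yy}^2 g$ together with equality of the mixed partials are exactly what convert the bare implicit-function formula into the stated $M$-form. Everything else is routine once the stationarity condition and invertibility of $\nabla_{yy}^2 g$ are in hand.
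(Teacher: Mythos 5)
Your proof is correct and follows essentially the same route as the paper's: stationarity $\nabla_y g(\bar x, y^*(\bar x))=0$, implicit differentiation with the implicit function theorem for part a), and the chain rule for part b). You are in fact more careful than the paper on two points it glosses over --- that uniqueness and invertibility really rest on the strong convexity in Assumption~\ref{g_assumption}.c), and the transpose/symmetry bookkeeping that turns $-[\nabla_{yy}^2 g]^{-1}\nabla_{yx}^2 g$ into $-M^\top$ --- but these are refinements of the same argument, not a different one.
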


\begin{proof}
{\color{black}The above results have been well-known from properties of implicit functions. For the sake of completeness, we provide briefly their proofs. Due to the definition of $y^*(x)$ in \eqnok{main_prob}, we have $\nabla_y g(\bar x,y^*(\bar x))=0$ due to the optimality condition of the inner problem. Then, by taking derivative on both sides, using the chain rule, and the implicit function theorem, we obtain
\[
\nabla_{yx}^2 g(\bar x,y^*(\bar x)) +  \nabla_{yy}^2 g(\bar x,y^*(\bar x)) \nabla y^*(\bar x)=0,
\]
which under Assumption~\ref{g_assumption}.c) and in the view of \eqnok{grad_f}, imply  \eqnok{grad_ystar}.} Part b) then follows immediately due to the chain rule.
\end{proof}

\vgap

Our next result measures the error in estimation of gradient of $f$.

\begin{lemma} \label{grad_f_error}
The following statements hold.
\begin{itemize}
\item [a)] Suppose that $\bar x \in X$ and $\bar y \in \bbr^m$ are given and Assumptions~\ref{f_assumption} and ~\ref{g_assumption} hold. Then, we have
\beq \label{def_grad_error}
\|\bar \nabla f(\bar x; \bar y) - \nabla f(\bar x; y^*(\bar x))\| \le C \|y^*(\bar x)-\bar y\|,
\eeq
where $C=L_{f_x}+\frac{L_{f_y}C_{g_{xy}}}{\mu_g} + C_{f_y} \left[\frac{L_{g_{xy}}}{\mu_g} +\frac{L_{g_{yy}}C_{g_{xy}}}{\mu_g^2}\right]$.

\item [b)] Under Assumptions~\ref{g_assumption}.c) and e), $y^*(x)$ is Lipschitz continuous in $x$ with constant $\frac{C_{g_{xy}}}{\mu_g}$.

\item [c)] Under Assumptions~\ref{f_assumption} and ~\ref{g_assumption}, $\nabla f$ is Lipschitz continuous in $x$ with constant $L_f$ i.e., for any given $\bar x_1, \bar x_2 \in X$, we have
\beq \label{def_f_smooth}
\|\nabla f(\bar x_2; y^*(\bar x_2)) - \nabla f(\bar x_1; y^*(\bar x_1))\| \le L_f \|\bar x_2 - \bar x_1\|,
\eeq
where $L_f =\frac{(\bar L_{f_y}+C) \cdot C_{g_{xy}}}{\mu_g}+L_{f_x}+C_{f_y}\left[\frac{\bar L_{g_{xy}}C_{f_y}}{\mu_g} +\frac{\bar L_{g_{yy}}C_{g_{xy}}}{\mu_g^2}\right]$.
\end{itemize}
\end{lemma}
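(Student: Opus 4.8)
The plan is to drive all three parts with one recurring mechanism: uniform control of the matrix map $M(\bar x, \bar y) = \nabla_{xy}^2 g(\bar x,\bar y)[\nabla_{yy}^2 g(\bar x,\bar y)]^{-1}$. Strong convexity (Assumption~\ref{g_assumption}.c) gives $\nabla_{yy}^2 g \succeq \mu_g I$, hence $\|[\nabla_{yy}^2 g]^{-1}\| \le 1/\mu_g$, and combined with $\|\nabla_{xy}^2 g\| \le C_{g_{xy}}$ (Assumption~\ref{g_assumption}.e) this yields $\|M(\bar x,\bar y)\| \le C_{g_{xy}}/\mu_g$. That bound alone settles part b): by the identity \eqnok{grad_ystar}, $\nabla y^*(\bar x) = -M(\bar x, y^*(\bar x))^\top$, so the Jacobian of $y^*$ has operator norm at most $C_{g_{xy}}/\mu_g$ everywhere, and a uniform Jacobian bound is exactly Lipschitz continuity with that constant. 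The second tool I will reuse is the resolvent identity $A^{-1}-B^{-1}=A^{-1}(B-A)B^{-1}$, which turns Lipschitzness of $\nabla_{yy}^2 g$ into Lipschitzness of its inverse while producing a second $\mu_g^{-1}$ factor.

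For part a) I would subtract the true-gradient formula \eqnok{grad_f2} from \eqnok{grad_f} and split the difference into $[\nabla_x f(\bar x;\bar y) - \nabla_x f(\bar x; y^*(\bar x))]$ minus $[M(\bar x,\bar y)\nabla_y f(\bar x;\bar y) - M(\bar x, y^*(\bar x))\nabla_y f(\bar x; y^*(\bar x))]$. The first bracket is $\le L_{f_x}\|\bar y - y^*(\bar x)\|$ by Assumption~\ref{f_assumption}.a). For the second, adding and subtracting $M(\bar x,\bar y)\nabla_y f(\bar x; y^*(\bar x))$ separates it into $M(\bar x,\bar y)[\nabla_y f(\bar x;\bar y)-\nabla_y f(\bar x; y^*(\bar x))]$, bounded by $\tfrac{C_{g_{xy}}}{\mu_g}L_{f_y}\|\bar y - y^*(\bar x)\|$, and $[M(\bar x,\bar y)-M(\bar x, y^*(\bar x))]\nabla_y f(\bar x; y^*(\bar x))$, bounded by $C_{f_y}\|M(\bar x,\bar y)-M(\bar x, y^*(\bar x))\|$ using Assumption~\ref{f_assumption}.b). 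The crux is the $y$-Lipschitz estimate of $M$: decomposing $M(\bar x,\bar y)-M(\bar x, y^*(\bar x))$ so as to isolate the change in $\nabla_{xy}^2 g$ from the change in $[\nabla_{yy}^2 g]^{-1}$ (the latter via the resolvent identity) and invoking Assumption~\ref{g_assumption}.d) gives the factor $\tfrac{L_{g_{xy}}}{\mu_g}+\tfrac{L_{g_{yy}}C_{g_{xy}}}{\mu_g^2}$. Summing the three contributions reproduces $C$ exactly.

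Part c) is the substantive one, and I would reduce it to parts a) and b) by using that the approximation is exact at the optimum, $\nabla f(\bar x; y^*(\bar x)) = \bar\nabla f(\bar x; y^*(\bar x))$. Writing $y_i^* = y^*(\bar x_i)$, I split $\bar\nabla f(\bar x_2; y_2^*)-\bar\nabla f(\bar x_1; y_1^*)$ through the intermediate point $\bar\nabla f(\bar x_2; y_1^*)$. The first difference varies only in $y$ at fixed $\bar x_2$ and is controlled by the estimate of part a) — which is really a $C$-Lipschitz bound of $\bar\nabla f(\bar x;\cdot)$ in its second argument — giving $C\|y_2^*-y_1^*\| \le \tfrac{C C_{g_{xy}}}{\mu_g}\|\bar x_2-\bar x_1\|$ after inserting part b). The second difference varies only in $x$; decomposing it as in part a) but differentiating in the $x$ argument, the $\nabla_x f$ term contributes $L_{f_x}$, the term $M\,[\nabla_y f(\bar x_2; y_1^*)-\nabla_y f(\bar x_1; y_1^*)]$ contributes $\tfrac{\bar L_{f_y}C_{g_{xy}}}{\mu_g}$ via Assumption~\ref{f_assumption}.c), and the $x$-Lipschitzness of $M$ — obtained by rerunning the resolvent-identity argument with Assumption~\ref{g_assumption}.f) replacing .d) — contributes $C_{f_y}\big[\tfrac{\bar L_{g_{xy}}}{\mu_g}+\tfrac{\bar L_{g_{yy}}C_{g_{xy}}}{\mu_g^2}\big]$. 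The four pieces add up to $L_f$.

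The main obstacle is twofold. Computationally, it is the repeated Lipschitz control of $M$ in each argument: the resolvent identity is the right tool, but one must retain both $\mu_g^{-1}$ factors to land on the $\mu_g^{-2}$ terms. Conceptually, the delicate point in part c) is that a direct bound on $\nabla_x f(\bar x_2; y_1^*)-\nabla_x f(\bar x_1; y_1^*)$ requires Lipschitzness of $\nabla_x f$ in $x$, whereas Assumption~\ref{f_assumption}.a) as literally stated supplies it only in $y$; I would read the constant $L_{f_x}$ as also governing the $x$-variation (consistent with its appearance in $L_f$), so that the freeze-$y$-then-vary-$x$ split lets the available $x$-Lipschitz hypotheses \ref{f_assumption}.c) and \ref{g_assumption}.f) carry the argument.
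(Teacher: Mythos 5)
Your proposal is correct and takes essentially the same route as the paper's proof: the identical three-way decomposition for part a) (the $\nabla_x f$ difference, the $M[\nabla_y f(\cdot;\bar y)-\nabla_y f(\cdot;y^*)]$ term, and the $[M(\bar x,\bar y)-M(\bar x,y^*)]\nabla_y f$ term handled via the resolvent identity $H_2^{-1}-H_1^{-1}=H_1^{-1}(H_1-H_2)H_2^{-1}$), the same Jacobian bound $\|\nabla y^*(\bar x)\|=\|M(\bar x,y^*(\bar x))\|\le C_{g_{xy}}/\mu_g$ for part b), and the same triangle-inequality split through the intermediate point $\bar \nabla f(\bar x_2; y^*(\bar x_1))$ for part c). Your observation that the $L_{f_x}$ term in $L_f$ forces one to read Assumption~\ref{f_assumption}.a) as also governing the $x$-variation of $\nabla_x f$ is accurate and reflects an implicit reading in the paper itself, not a defect of your argument.
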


\begin{proof}
First, denoting
\beqa
\Delta_k &=& \nabla f(\bar x; \bar y) - \nabla f(\bar x; y^*(\bar x)),\ \ \ \ \Delta_k^1 = \nabla_x f(\bar x; \bar y) - \nabla_x f(\bar x; y^*(\bar x)), \nn \\
\Delta_k^2 &=& M(\bar x, \bar y)\nabla_y f(\bar x;\bar y) - M(\bar x, y^*(\bar x))\nabla_y f(\bar x; y^*(\bar x))\nn \\
\Delta_k^3 &=& M(\bar x, \bar y)\left\{\nabla_y f(\bar x;\bar y)-\nabla f(\bar x; y^*(\bar x))\right\}, \nn \\
\Delta_k^4 &=& \left\{M(\bar x, \bar y)-M(\bar x, y^*(\bar x)) \right\}\nabla_y f(\bar x; y^*(\bar x)), \nn \\
\Delta_k^5 &=& \left\{\nabla_{xy}^2 g(\bar x;\bar y)-\nabla_{xy}^2 g(\bar x,y^*(\bar x))\right\}\left[\nabla_{yy}^2 g(\bar x,\bar y)\right]^{-1}, \nn \\
\Delta_k^6 &=& \nabla_{xy}^2 g(\bar x,y^*(\bar x))
\left\{\left[\nabla_{yy}^2 g(\bar x;\bar y)\right]^{-1}-\left[\nabla_{yy}^2 g(\bar x,y^*(\bar x))\right]^{-1}\right\}, \nn
\eeqa
and in the view of \eqnok{grad_f} and \eqnok{grad_f2}, we obtain
\beq
\Delta_k = \Delta_k^1 - \Delta_k^2 = \Delta_k^1 - \Delta_k^3 -\Delta_k^4 = \Delta_k^1 - \Delta_k^3 -\left(\Delta_k^5+\Delta_k^6 \right) \nabla_y f(\bar x, y^*(\bar x)).\label{proof_error1}
\eeq
Now, under Assumptions~\ref{f_assumption} and ~\ref{g_assumption}, we have
\beqa
\|\Delta_k^1\| &\le& L_{f_x}\|y^*(\bar x)-\bar y\|, \ \
\|\left[\nabla_{yy}^2 g\right]^{-1}\| \le \frac{1}{\mu_g}, \ \
\|\Delta_k^3\| \le \frac{L_{f_y}C_{g_{xy}}}{\mu_g} \|y^*(\bar x)-\bar y\|, \nn \\
\|\nabla_y f\| &\le& C_{f_y}, \ \ \|\Delta_k^5\| \le \frac{L_{g_{xy}}}{\mu_g} \|y^*(\bar x)-\bar y\|, \ \
\|\Delta_k^6\| \le \frac{L_{g_{yy}}C_{g_{xy}}}{\mu_g^2} \|y^*(\bar x)-\bar y\|,\label{proof_error2}
\eeqa
where the last inequality follows from the fact that
\[
\|H_2^{-1} - H_1^{-1}\| = \|H_1^{-1}\left(H_1 - H_2 \right)H_2^{-1}\| \le \|H_1^{-1}\|\|H_2^{-1}\|\|H_1 - H_2\|
\]
for any invertible matrices $H_1$ and $H_2$. Combining \eqnok{proof_error1} and \eqnok{proof_error2} with Cauchy-Schwarz inequality, we obtain \eqnok{def_grad_error}.

Second, noting \eqnok{grad_ystar} and \eqnok{proof_error2}, we have
\[
\|\nabla y^*(\bar x)\| = \|M(\bar x, y^*(\bar x))\| \le \frac{C_{g_{xy}}}{\mu_g},
\]
which clearly implies part b).

Third, noting \eqnok{grad_f}, we have
\beqa
\|\nabla f(\bar x_2; y^*(\bar x_2)) - \nabla f(\bar x_1; y^*(\bar x_1))\| &\le& \|\nabla f(\bar x_2; y^*(\bar x_2)) - \bar \nabla f(\bar x_2; y^*(\bar x_1))\| \nn \\
&+& \|\bar \nabla f(\bar x_2; y^*(\bar x_1)) - \nabla f(\bar x_1; y^*(\bar x_1))\|.\nn
\eeqa
Then, \eqnok{def_f_smooth} follows similarly to part a) by noting part b).
\end{proof}

\vgap

{\color{black}It should be mentioned that results of Lemma~\ref{grad_f_error}.b) and .c) have been also shown in \cite{CouWan16} under slightly different assumptions for the special case of problem \eqnok{main_prob} where $x$ does not explicitly appear in the definition of $f$.}
Next result establishes convergence of the inner loop in Algorithm~\ref{alg_BG}, which essentially follows from convergence analysis of the classical the gradient method.
\begin{lemma}\label{lemma_gd}
Let $\{y_t\}_{t=0}^{t_k}$ be the sequence generated at the $k$-th iteration of Algorithm~\ref{alg_BG}, Assumptions~\ref{g_assumption}.b), and ~\ref{g_assumption}.c) hold. If $\beta_t = 2/(\mu_g+L_g) \ \ t \ge 0$, we have
\beq\label{y_cnvrg}
\|y_{t_k} - y^*(x_k) \| \le \left(\frac{Q_g-1}{Q_g+1}\right)^{t_k} \|y_0 - y^*(x_k) \|,
\eeq
where $Q_g = L_g/\mu_g$ denote the condition number of the inner function $g$.
\end{lemma}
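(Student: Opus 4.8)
The plan is to recognize the inner loop as plain gradient descent on a fixed strongly convex smooth objective and to establish the stated geometric contraction one step at a time. First I would fix the outer index $k$ and set $\phi(\cdot) := g(x_k,\cdot)$. During the inner loop $x_k$ is held constant, so by Assumption~\ref{g_assumption}.b) the map $\phi$ is $L_g$-smooth and by Assumption~\ref{g_assumption}.c) it is $\mu_g$-strongly convex; in particular its minimizer $y^*(x_k)$ is unique and satisfies the first-order condition $\nabla_y g(x_k,y^*(x_k))=0$. With this notation the update \eqnok{def_yt} is exactly the constant-stepsize gradient step $y_{t+1}=y_t-\beta\nabla\phi(y_t)$ with $\beta=2/(\mu_g+L_g)$.

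The core of the argument is a single-step distance recursion. Writing $y^*:=y^*(x_k)$ and using $\nabla\phi(y^*)=0$, I would expand
\[
\|y_{t+1}-y^*\|^2=\|y_t-y^*\|^2-2\beta\langle\nabla\phi(y_t)-\nabla\phi(y^*),y_t-y^*\rangle+\beta^2\|\nabla\phi(y_t)-\nabla\phi(y^*)\|^2.
\]
To control the cross term I would invoke the standard coercivity estimate for functions that are simultaneously $\mu_g$-strongly convex and $L_g$-smooth (Nesterov, Thm.~2.1.12),
\[
\langle\nabla\phi(y_t)-\nabla\phi(y^*),y_t-y^*\rangle\ge\frac{\mu_g L_g}{\mu_g+L_g}\|y_t-y^*\|^2+\frac{1}{\mu_g+L_g}\|\nabla\phi(y_t)-\nabla\phi(y^*)\|^2.
\]
Substituting this bound, the squared-gradient terms combine into the coefficient $\beta^2-2\beta/(\mu_g+L_g)$, and the whole point of the prescribed stepsize is that this coefficient vanishes identically at $\beta=2/(\mu_g+L_g)$.

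What remains is the coefficient of $\|y_t-y^*\|^2$, namely $1-4\mu_gL_g/(\mu_g+L_g)^2=(\mu_g-L_g)^2/(\mu_g+L_g)^2$, which after dividing numerator and denominator by $\mu_g^2$ equals $\big((Q_g-1)/(Q_g+1)\big)^2$ with $Q_g=L_g/\mu_g$. Taking square roots gives the per-iteration contraction $\|y_{t+1}-y^*\|\le\tfrac{Q_g-1}{Q_g+1}\|y_t-y^*\|$, and chaining this inequality over $t=0,1,\dots,t_k-1$ produces \eqnok{y_cnvrg}. I do not expect any genuine obstacle here: the only non-mechanical ingredient is the combined strong-convexity/smoothness inner-product inequality, and once that is in hand the exact cancellation of the gradient term under $\beta=2/(\mu_g+L_g)$ is precisely what collapses the contraction factor to the clean ratio $(Q_g-1)/(Q_g+1)$ rather than a looser bound. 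The only bookkeeping point worth stating explicitly is that $y^*(x_k)$ is well defined and that $\nabla\phi$ vanishes there, which is exactly what Assumption~\ref{g_assumption}.c) guarantees.
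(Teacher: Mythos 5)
Your proof is correct and takes essentially the same route as the paper: the paper's proof of Lemma~\ref{lemma_gd} simply observes that $y^*(x_k)$ minimizes $g(x_k,\cdot)$ and then cites the standard contraction result for constant-stepsize gradient descent on smooth strongly convex functions (Nesterov), which is exactly the argument you write out in full via the coercivity inequality and the cancellation at $\beta=2/(\mu_g+L_g)$. There is no gap; you have merely supplied the details the paper delegates to the reference.
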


\begin{proof}
Note that $y^*(x_k)$ is the optimal solution of inner problem in \eqnok{main_prob} when $x=x_k$. Then, \eqnok{y_cnvrg} follows from the standard proofs for the gradient descent method when applied to smooth strongly convex problems (see e.g., \cite{Nest04}).
\end{proof}

\vgap

The above results show that there is a trade-off between inexactness of the gradient estimation of $f$ in \eqnok{grad_f} and accuracy of the solution obtained by the inner loop of Algorithm~\ref{alg_BG}. We are now ready to present the main convergence results of this algorithm.

\begin{theorem}[Convergence results for the BA algorithm]\label{theom_main_bg}
Suppose that $\{\bar y_k, x_k \}_{k \ge 0}$ is generated by Algorithm~\ref{alg_BG}, Assumptions~\ref{f_assumption} and ~\ref{g_assumption} hold, and stepsizes are chosen such that
\beq \label{alpha_beta}
\beta_t = \frac{2}{L_g+\mu_g}  \ \ \forall t \ge 0, \quad \alpha_k \le \frac{1}{L_f} \ \ \forall k \ge 0.
\eeq
\begin{itemize}
\item [a)] If $f$ is strongly convex with parameter $\mu_f>0$, we have, for any $N \ge 1$,
\beq\label{bg_strng_cvx}
f(x_N;y^*(x_N))-f^* \le \Gamma_N \left[f(x_0;y^*(x_0))-f^*+\frac{C^2}{2} \sum_{k=0}^{N-1}\frac{\alpha_k A_k}{\Gamma_{k+1}} \right],
\eeq
where
\beq \label{def_Ak}
A_k = \|y_0 - y^*(x_k) \|^2\left(\frac{Q_g-1}{Q_g+1}\right)^{2t_k},
\eeq
\beq \label{def_Gamma}
\Gamma_1 := \left\{
\begin{array}{ll}
 1, & \gamma_0 = 1,\\
1 - \gamma_0, & \gamma_0<1,
\end{array} \right. \ \
\Gamma_{k} := \Gamma_1 \prod_{i=1}^{k-1} (1 - \gamma_i)  \ \ \forall k \ge 2,
\eeq
and
\beq\label{def_gama}
0<\gamma_k \le \alpha_k \mu_f \ \ \forall k \ge 0.
\eeq
\item [b)] If $f$ is convex and $X$ is bounded, we have
\beq \label{bg_cvx}
f(\bar x_N;y^*(\bar x_N))-f^* \le \frac{1}{N} \sum_{k=0}^{N-1}\left(\frac{1}{2 \alpha_k}\left[\|x^*-x_k\|^2-\|x^*-x_{k+1}\|^2\right]+C D_X \sqrt{A_k} \right),
\eeq
where
\beq\label{def_xave}
\bar x_N = \frac{\sum_{k=1}^{N} x_k}{N}.
\eeq

\item [c)] If $f$ is possibly nonconvex, $X=\bbr^n$ (for simplicity), and stepsizes are chosen such that $\alpha_k < 1/(2L_f)$, we have
\beq\label{bg_nocvx}
\sum_{k=0}^{N-1} \frac{\alpha_k}{2} (1-2L_f \alpha_k) \|\nabla f(x_k;y^*(x_k))\|^2 \le f(x_0;y^*(x_0))-f^*+\frac{C^2}{2} \sum_{k=0}^{N-1} \left[\alpha_k(1+2L_f\alpha_k)A_k \right].
\eeq
\end{itemize}
\end{theorem}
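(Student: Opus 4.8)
The plan is to recognize Algorithm~\ref{alg_BG} as an inexact projected gradient method applied to the reduced objective $f(\cdot;y^*(\cdot))$, whose true gradient at $x_k$ is $\nabla f(x_k;y^*(x_k))$ but whose computable surrogate is $\bar\nabla f(x_k;\bar y_k)$. Writing the gradient error as $\delta_k := \bar\nabla f(x_k;\bar y_k) - \nabla f(x_k;y^*(x_k))$, the first thing I would do is control it: chaining Lemma~\ref{grad_f_error}.a) with Lemma~\ref{lemma_gd} and the definition \eqnok{def_Ak} of $A_k$ gives
\[
\|\delta_k\| \le C\,\|y^*(x_k)-\bar y_k\| = C\,\|y^*(x_k)-y_{t_k}\| \le C\sqrt{A_k},
\]
so all inexactness introduced by the inner loop is quantified by the single quantity $C\sqrt{A_k}$. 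This is the only place the inner loop enters the outer analysis.

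Next I would derive one master per-iteration inequality that serves all three parts. Since the prox subproblem \eqnok{def_xk} has a $\tfrac{1}{\alpha_k}$-strongly convex objective, its optimality yields the three-point inequality
\[
\langle \bar\nabla f(x_k;\bar y_k),\,x_{k+1}-u\rangle \le \tfrac{1}{2\alpha_k}\left(\|u-x_k\|^2 - \|u-x_{k+1}\|^2 - \|x_{k+1}-x_k\|^2\right) \quad \forall u\in X.
\]
Combining this with the $L_f$-smoothness of $f(\cdot;y^*(\cdot))$ (Lemma~\ref{grad_f_error}.c)), substituting $\bar\nabla f(x_k;\bar y_k)=\nabla f(x_k;y^*(x_k))+\delta_k$, invoking (strong) convexity of the objective, and using $\alpha_k\le 1/L_f$ from \eqnok{alpha_beta} to absorb $\tfrac{L_f}{2}\|x_{k+1}-x_k\|^2$ against $-\tfrac{1}{2\alpha_k}\|x_{k+1}-x_k\|^2$, I obtain for every $u\in X$
\[
f(x_{k+1};y^*(x_{k+1})) \le f(u;y^*(u)) + \tfrac{1}{2\alpha_k}\|u-x_k\|^2 - \tfrac{1}{2\alpha_k}\|u-x_{k+1}\|^2 - \tfrac{\mu_f}{2}\|u-x_k\|^2 + \langle \delta_k,\, u-x_{k+1}\rangle,
\]
where the $-\tfrac{\mu_f}{2}\|u-x_k\|^2$ term is present in part~a) (strong convexity) and vanishes ($\mu_f=0$) in part~b).

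From this master inequality the three parts follow by different bookkeeping. For part~c) ($X=\bbr^n$, possibly nonconvex) I would instead take the simpler unconstrained route: apply the descent lemma directly to $x_{k+1}=x_k-\alpha_k\bar\nabla f(x_k;\bar y_k)$, split the cross term $\langle\nabla f,\delta_k\rangle$ and expand $\|\nabla f+\delta_k\|^2$ by Young's inequality, insert $\|\delta_k\|^2\le C^2A_k$, and telescope to reach \eqnok{bg_nocvx}; the stepsize condition $\alpha_k<1/(2L_f)$ is exactly what keeps the coefficient of $\|\nabla f(x_k;y^*(x_k))\|^2$ positive. For part~b) I set $u=x^*$ with $\mu_f=0$ in the master inequality, bound the error term by $\langle\delta_k,x^*-x_{k+1}\rangle\le \|\delta_k\|\,D_X\le CD_X\sqrt{A_k}$ using boundedness of $X$, sum over $k$, divide by $N$, and apply Jensen's inequality to the average $\bar x_N$ in \eqnok{def_xave} to obtain \eqnok{bg_cvx}. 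For part~a) I set $u=x^*$, use strong convexity together with a convex-combination choice of $u$ on the segment $[x_k,x^*]$ to convert the distance terms into the function-value contraction
\[
f(x_{k+1};y^*(x_{k+1}))-f^* \le (1-\gamma_k)\left(f(x_k;y^*(x_k))-f^*\right) + \tfrac{C^2\alpha_k A_k}{2},
\]
with $0<\gamma_k\le\alpha_k\mu_f$, where the residual $\tfrac{C^2\alpha_k A_k}{2}$ comes from completing the square $\langle\delta_k,u-x_{k+1}\rangle-\tfrac{1}{2\alpha_k}\|u-x_{k+1}\|^2\le\tfrac{\alpha_k}{2}\|\delta_k\|^2$. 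Dividing by $\Gamma_{k+1}$ and telescoping with $\Gamma_{k+1}=\Gamma_k(1-\gamma_k)$ from \eqnok{def_Gamma} then produces \eqnok{bg_strng_cvx}.

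The hard part will be part~a): extracting a clean \emph{function-value} contraction with factor $(1-\gamma_k)$, $\gamma_k\le\alpha_k\mu_f$, from the projected (constrained) update, since the prox inequality naturally generates distance terms $\|x_k-x^*\|^2$ rather than $f(x_k;y^*(x_k))-f^*$. The delicate step is choosing the free point $u$ and invoking strong convexity so that the positive multiple of $\|x_k-x^*\|^2$ is reabsorbed exactly into $(1-\gamma_k)\bigl(f(x_k;y^*(x_k))-f^*\bigr)$ while the inexactness residual is kept at precisely $\tfrac{C^2\alpha_k A_k}{2}$; parts~b) and c) are comparatively routine once the master inequality and the error bound $\|\delta_k\|\le C\sqrt{A_k}$ are established.
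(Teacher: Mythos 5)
Your proposal is correct and follows essentially the same route as the paper: the error bound $\|\Delta_k\|\le C\sqrt{A_k}$ from Lemma~\ref{grad_f_error}.a) and Lemma~\ref{lemma_gd}, the three-point prox inequality added to the $L_f$-smoothness bound to get the master per-iteration inequality, the convex-combination point $u=\theta_k x^*+(1-\theta_k)x_k$ with $\theta_k=\alpha_k\mu_f$ plus the completion of squares $\langle\Delta_k,u-x_{k+1}\rangle-\tfrac{1}{2\alpha_k}\|u-x_{k+1}\|^2\le\tfrac{\alpha_k}{2}\|\Delta_k\|^2$ for the strongly convex contraction, $u=x^*$ with Cauchy--Schwarz and $D_X$ for the convex case, and the direct descent-lemma expansion of $x_{k+1}=x_k-\alpha_k\bar\nabla f(x_k;\bar y_k)$ with Young's inequality for the nonconvex case. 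The telescoping via $\Gamma_{k+1}=\Gamma_k(1-\gamma_k)$ and the Jensen step for $\bar x_N$ also match the paper's argument exactly.
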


\begin{proof}
We first show part a). Noting that subproblem \eqnok{def_xk} is strongly convex, we have
\[
\langle \bar \nabla f(x_k;\bar y_k),x_{k+1}-u \rangle \le \frac{1}{2 \alpha_k}\left[\|u-x_k\|^2-\|u-x_{k+1}\|^2-\|x_{k+1}-x_k\|^2 \right] \ \ \forall u \in X.
\]
Moreover, noting the smoothness of $f$ due to Lemma~\ref{grad_f_error}.c), we have
\[
f(x_{k+1};y^*(x_{k+1})) \le f(x_k;y^*(x_k)) + \langle \nabla f(x_k;y^*(x_k)),x_{k+1}-x_k \rangle +\frac{L_f}{2}\|x_{k+1}-x_k\|^2.
\]
Adding the above inequalities, denoting $\Delta_k \equiv \bar \nabla f(x_k;\bar y_k)-\nabla f(x_k;y^*(x_k))$, and re-arranging the terms,  we obtain
\begin{align}
f(x_{k+1}; y^*(x_{k+1})) \le f(x_k;y^*(x_k)) &+ \langle \nabla f(x_k;y^*(x_k)),u-x_k \rangle +\frac{1}{2 \alpha_k}\left[\|u-x_k\|^2-\|u-x_{k+1}\|^2\right] \nn \\
&- \frac{(1-L_f \alpha_k)}{2 \alpha_k} \|x_{k+1}-x_k\|^2 +  \langle \Delta_k ,u-x_{k+1} \rangle \quad \forall u \in X, \label{BG_proof1}
\end{align}
which together with the choice of $\alpha_k$ in \eqnok{alpha_beta} and the fact that
\beq \label{delta_CS_ineq}
\langle \Delta_k ,u-x_{k+1} \rangle \le \|\Delta_k\| \cdot \|u-x_{k+1}\| \le \frac{a}{2}\|\Delta_k\|^2 + \frac{1}{2a}\|u-x_{k+1}\|^2,
\eeq
(with $a=\alpha_k$) imply that
\[
f(x_{k+1};y^*(x_{k+1})) \le f(x_k;y^*(x_k)) + \langle \nabla f(x_k;y^*(x_k)),u-x_k \rangle +\frac{1}{2 \alpha_k}\|u-x_k\|^2+\frac{\alpha_k}{2}\|\Delta_k\|^2 \ \ \forall u \in X.
\]
Letting $u = \theta_k x^*+(1-\theta_k)x_k$ for some $\theta_k \in [0,1]$ in the above inequality, noting strong convexity of $f$, the choice of $\beta_k$ in \eqnok{alpha_beta}, \eqnok{y_cnvrg}, and \eqnok{def_grad_error} we have
\beqa
f(x_{k+1};y^*(x_{k+1})) &\le& (1-\theta_k)f(x_k;y^*(x_k)) + \theta_k \left[f(x_k;y^*(x_k))+\langle \nabla f(x_k;y^*(x_k)),x^*-x_k \rangle \right]\nn \\
&&\qquad \qquad \qquad \qquad \qquad \qquad \qquad \quad \quad \qquad+  \frac{\theta_k^2}{2 \alpha_k}\|x^*-x_k\|^2+\frac{\alpha_k}{2}\|\Delta_k\|^2 \label{BG_proof2} \\
&\le& (1-\theta_k)f(x_k;y^*(x_k)) + \theta_k f(x^*;y^*(x^*)) -\frac{\theta_k \mu_f }{2}\left(1- \frac{\theta_k}{\mu_f \alpha_k}\right) \|x^*-x_k\|^2 \nn \\
&& \qquad \qquad \qquad \qquad \qquad \qquad \qquad \qquad \quad+\frac{\alpha_k \|y_0 - y^*(x_k) \|^2}{2}\left(\frac{Q_g-1}{Q_g+1}\right)^{2t_k}  \nn \\
&=& (1-\alpha_k \mu_f)f(x_k;y^*(x_k)) + \alpha_k \mu_f f(x^*;y^*(x^*))+\frac{C^2 \alpha_k \|y_0 - y^*(x_k) \|^2}{2}\left(\frac{Q_g-1}{Q_g+1}\right)^{2t_k}, \nn
\eeqa
where the last equality follows from choosing $\theta_k =\alpha_k \mu_f$ which is less than $1$ due to \eqnok{alpha_beta}. Subtracting $f(x^*;y^*(x^*))$ from both sides of the above inequality, noting \eqnok{def_Ak}, and \eqnok{def_gama}, we have
\[
f(x_{k+1};y^*(x_{k+1}))-f(x^*;y^*(x^*)) \le (1-\gamma_k) [f(x_k;y^*(x_k))-f(x^*;y^*(x^*))]+\frac{C^2 \alpha_k A_k}{2}.
\]
dividing both sides by $\Gamma_{k+1}$, summing them up by noting \eqnok{def_Gamma}, we obtain \eqnok{bg_strng_cvx}.

We now show part b). Setting $u=x^*$ in \eqnok{BG_proof1}, noting convexity of $f$, and boundedness of $X$, the first inequality in \eqnok{delta_CS_ineq}, and after re-arranging the terms, we obtain
\[
f(x_{k+1};y^*(x_{k+1}))-f(x^*;y^*(x^*)) \le \frac{1}{2 \alpha_k}\left[\|x^*-x_k\|^2-\|x^*-x_{k+1}\|^2\right]+D_X \|\Delta_k\|.
\]
Summing up both sides of the above inequality and then dividing them by $N$, we obtain \eqnok{bg_cvx} in the view of \eqnok{def_xave} due to the convexity of $f$.

Finally, we show part c). If $f$ is possibly nonconvex and $X=\bbr^n$, then by \eqnok{def_xk}, we have $x_{k+1} = x_k - \alpha_k \bar \nabla f(x_k;\bar y_k)$ which together with the choice of $u=x_{k+1}$ in \eqnok{BG_proof1}, imply that
\beqa
f(x_{k+1};y^*(x_{k+1})) &\le& f(x_k;y^*(x_k)) -\alpha_k \|\nabla f(x_k;y^*(x_k))\|^2 - \alpha_k \langle \nabla f(x_k;y^*(x_k)), \Delta_k \rangle  \nn \\
&& \qquad \qquad \qquad \qquad \qquad \qquad \qquad \qquad+\frac{L_f \alpha_k^2}{2}\|\nabla f(x_k;y^*(x_k))+\Delta_k\|^2 \nn \\
&\le& f(x_k;y^*(x_k)) -\frac{\alpha_k}{2}(1-2L_f \alpha_k) \|\nabla f(x_k;y^*(x_k))\|^2 +\frac{\alpha_k}{2}(1+2L_f \alpha_k) \|\Delta_k\|^2.\nn \\\label{nocvx_p1}
\eeqa
Summing up both sides of the above inequality, re-arranging the terms, noting that $\alpha_k < 1/(2L_f)$, \eqnok{y_cnvrg}, \eqnok{def_grad_error}, and \eqnok{def_Ak}, we obtain \eqnok{bg_nocvx}.
\end{proof}

\vgap

{\color{black}In the next result, we specialize rates of convergence of Algorithm~\ref{alg_BG} when applied to problem \eqnok{main_prob} under different convexity assumptions on $f$.}
\begin{corollary} \label{lemma_main_bg}
Suppose that $\{\bar y_k, x_k \}_{k \ge 0}$ is generated by Algorithm~\ref{alg_BG}, Assumptions~\ref{f_assumption} and ~\ref{g_assumption} hold, $\beta_k$ is set to \eqnok{alpha_beta}, and
\beq \label{alpha_beta1}
\alpha_k = \frac{1}{3L_f} \ \ \forall k \ge 0.
\eeq
\begin{itemize}
\item [a)] If $f$ is strongly convex with parameter $\mu_f>0$, and $t_k =k+1$, we have, for any $N \ge 1$,
\beq\label{bg_strng_cvx1}
f(x_N;y^*(x_N))-f^* \le (1-\gamma)^N\left[f(x_0;y^*(x_0))-f^*+\frac{(Q_g-1) M^2 C^2}{6 L_f} \right],
\eeq
where
\beq \label{def_param}
\gamma_k = \gamma = \min \left(\frac{\mu_f}{3L_f},\frac{2}{Q_g+1}\right) \quad \forall k \ge 0, \qquad M = \max_{x \in X}\|y_0 - y^*(x) \|.
\eeq

\item [b)] If $f$ is convex, $X$ is bounded, and $t_k = \lceil \sqrt[4]{k+1}\rceil$, we have
\beq \label{bg_cvx1}
f(x_N;y^*(x_N))-f^* \le \frac{18 L_f D_X^2}{N} + \frac{(Q_g-1)^2(Q_g+1)^6 C^2 M^2}{75 L_f N}.
\eeq

\item [c)] If $f$ is possibly nonconvex, $X=\bbr^n$ (for simplicity), and $t_k = \lceil \frac{\sqrt[4]{k+1}}{2} \rceil$, we have
\beq\label{bg_nocvx1}
\bbe\left[\|\nabla f(x_R;y^*(x_R))\|^2\right] \le \frac{18L_f[f(x_0;y^*(x_0))-f^*]+5(Q_g-1)(Q_g+1)^3 C^2 M^2}{N},
\eeq
where the expectation is taken with respect to the integer random variable $R$ uniformly distributed over $\{0,1,\ldots, N-1\}$.
\end{itemize}
\end{corollary}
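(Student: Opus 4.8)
The plan is to feed the constant stepsize $\alpha_k = 1/(3L_f)$ of \eqnok{alpha_beta1} and the three prescribed inner-loop schedules $t_k$ into the corresponding inequalities of Theorem~\ref{theom_main_bg}, and then estimate the residual sums built from $A_k$. Throughout I abbreviate the inner-loop contraction factor of Lemma~\ref{lemma_gd} by $r := (Q_g-1)/(Q_g+1)\in(0,1)$, and I replace $\|y_0-y^*(x_k)\|$ by its uniform bound $M$, so that $A_k \le M^2 r^{2t_k}$ for every $k$.

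For part~(a) the stepsize and the constant $\gamma_k\equiv\gamma$ make \eqnok{def_Gamma} collapse to $\Gamma_k=(1-\gamma)^k$, which already supplies the linear factor $(1-\gamma)^N$ in \eqnok{bg_strng_cvx1}; only the bracketed constant remains. With $t_k=k+1$ we have $A_k\le M^2 r^{2(k+1)}$, so the residual sum in \eqnok{bg_strng_cvx} becomes
\[
\frac{C^2}{2}\sum_{k=0}^{N-1}\frac{\alpha_k A_k}{\Gamma_{k+1}} \le \frac{C^2 M^2}{6L_f}\sum_{k=0}^{N-1}\Bigl(\frac{r^2}{1-\gamma}\Bigr)^{k+1}.
\]
The decisive observation is that the choice $\gamma\le 2/(Q_g+1)$ in \eqnok{def_param} forces $1-\gamma\ge 1-2/(Q_g+1)=r$, whence $r^2/(1-\gamma)\le r$ and the series is dominated by the convergent geometric sum $\sum_{k\ge0}r^{k+1}=r/(1-r)=(Q_g-1)/2$. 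This is precisely why the schedule $t_k=k+1$ is the right one here: the inner accuracy must contract at least as fast as $\Gamma_{k+1}$ shrinks, and the $\min$ defining $\gamma$ balances the two geometric rates exactly.

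Parts~(b) and~(c) follow the same template, except that now the error sums need only be \emph{bounded independently of} $N$, so that after division by $N$ the whole bound is $O(1/N)$. In part~(b) I first telescope the distance term: since $\alpha_k$ is constant, $\frac{1}{2\alpha_k}\sum_{k=0}^{N-1}[\|x^*-x_k\|^2-\|x^*-x_{k+1}\|^2]\le \frac{3L_f}{2}\|x^*-x_0\|^2\le \frac{3L_f}{2}D_X^2$, which after dividing by $N$ gives the first $O(D_X^2/N)$ term. The error term is $\frac{CD_X}{N}\sum_{k=0}^{N-1}\sqrt{A_k}\le \frac{CD_X M}{N}\sum_{k=0}^{N-1}r^{\lceil\sqrt[4]{k+1}\rceil}$, while in part~(c), after substituting $\alpha_k=1/(3L_f)$ (so that $1-2L_f\alpha_k=\frac13$ and $1+2L_f\alpha_k=\frac53$), the right-hand side of \eqnok{bg_nocvx} contributes $\frac{5C^2}{18L_f}\sum_{k=0}^{N-1}A_k\le \frac{5C^2M^2}{18L_f}\sum_{k=0}^{N-1}r^{2\lceil\sqrt[4]{k+1}/2\rceil}$. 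Using $\lceil\sqrt[4]{k+1}\rceil\ge\sqrt[4]{k+1}$ and $2\lceil\sqrt[4]{k+1}/2\rceil\ge\sqrt[4]{k+1}$, both reduce to the single master series $S:=\sum_{k\ge0}r^{\sqrt[4]{k+1}}$; the factor of two between the two schedules is exactly what is needed so that $\sqrt{A_k}$ in~(b) and $A_k$ in~(c) land on the same $S$.

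The main obstacle, and the only step that is not pure bookkeeping, is to bound the sub-geometric series $S$ by an explicit constant polynomial in $Q_g$. I would compare it with the integral $\int_0^\infty r^{t^{1/4}}\,dt$ and substitute $u=t^{1/4}$ to get $4\int_0^\infty u^3 e^{-au}\,du = 24/a^4$, where $a:=-\ln r=\ln\frac{Q_g+1}{Q_g-1}$; the elementary bound $\ln(1+s)\ge s/(1+s)$ with $s=2/(Q_g-1)$ then yields $a\ge 2/(Q_g+1)$, hence $S\le 24/a^4\le \frac{3}{2}(Q_g+1)^4$. This turns each error sum into a finite constant of the advertised order, and collecting the numerical factors produced by $\alpha_k=1/(3L_f)$ (the $\frac16$, $\frac1{18}$, $\frac59$ above) gives \eqnok{bg_cvx1} and \eqnok{bg_nocvx1}; for the latter one finally reads the left-hand side of \eqnok{bg_nocvx} as $\frac{N}{18L_f}\,\bbe[\|\nabla f(x_R;y^*(x_R))\|^2]$ with $R$ uniform on $\{0,\dots,N-1\}$ and rearranges. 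Matching the remaining constants and powers of $(Q_g\pm1)$ is then routine.
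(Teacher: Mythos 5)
Your proposal follows the same skeleton as the paper's proof: plug \eqnok{alpha_beta1} and the schedules $t_k$ into Theorem~\ref{theom_main_bg}; in part (a) exploit $1-\gamma\ge (Q_g-1)/(Q_g+1)=r$ so that $A_k/\Gamma_{k+1}\le M^2r^{2t_k-k-1}=M^2r^{k+1}$ sums geometrically to $(Q_g-1)/2$; in parts (b), (c) telescope the distance term and reduce everything to the master series $S=\sum_k r^{\sqrt[4]{k+1}}$; and read the left side of \eqnok{bg_nocvx} as $\tfrac{N}{18L_f}\,\bbe[\|\nabla f(x_R;y^*(x_R))\|^2]$. Part (a) is essentially identical to the paper's argument.

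The one place you genuinely diverge is the bound on $S$, and this is where your constants come apart from the stated ones. The paper bounds $S$ by block counting, $\sum_{k\ge 0}\rho^{\sqrt[4]{k+1}}\le \sum_{j\ge 1}[(j+1)^4-j^4]\rho^{j}\le 15\rho/(1-\rho)^4$ (see \eqnok{power_sum}); every term carries at least one factor of $\rho=r$, which is exactly where the factors $(Q_g-1)$ in \eqnok{bg_cvx1} and $(Q_g-1)(Q_g+1)^3$ in \eqnok{bg_nocvx1} come from. Your integral comparison gives $S\le 24/a^4$ with $a=\ln\frac{Q_g+1}{Q_g-1}$, which is fine, but the simplification $a\ge 2/(Q_g+1)$ yields $S\le \tfrac32(Q_g+1)^4$ and irretrievably loses the factor $r$: as $Q_g\to 1$ the paper's bound vanishes like $(Q_g-1)$ while yours tends to $24$, so your final constants in (b) and (c) do \emph{not} imply the stated inequalities in that regime (and no choice of absolute constant fixes this, since $24/(\ln(1/r))^4 \gg r$ as $r\to 0$). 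Relatedly, in part (b) you stop at a bound of the form $\tfrac{1}{N}\bigl(\tfrac{3L_f}{2}D_X^2 + CD_XMS\bigr)$, whereas \eqnok{bg_cvx1} has no $D_X\cdot CM$ cross term; reaching its stated shape requires a Young-inequality split of $CD_XMS$ into an $L_fD_X^2$ piece and a $C^2M^2S^2/L_f$ piece (the paper does this silently), which your ``collecting the numerical factors is routine'' glosses over. In fairness, the paper's own bookkeeping is loose (its displayed estimates are off from the stated constants by small factors in (a) and (c)), so what you have is a correct proof of the corollary's $O(1/N)$ rates with the same architecture, but as written it establishes (b) and (c) only with weaker constants, not the literal inequalities \eqnok{bg_cvx1} and \eqnok{bg_nocvx1}; switching your series estimate to the block-counting bound (or otherwise retaining one power of $r$) and adding the Young step closes the difference.
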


\begin{proof}
First, note that choices of $\alpha_k$, $\gamma_k$ in \eqnok{alpha_beta1}, \eqnok{def_param} satisfy \eqnok{alpha_beta}, \eqnok{def_gama}, and together with \eqnok{def_Gamma} and choice of $t_k =2k+1$, we have
\begin{align}
&\Gamma_k = (1-\gamma)^N \ge \left(\frac{Q_g-1}{Q_g+1}\right)^N, \nn \\
&\sum_{k=0}^{N-1}\frac{\alpha_k A_k}{\Gamma_{k+1}} \le \frac{M^2}{3 L_f} \sum_{k=0}^{N-1}  \left(\frac{Q_g-1}{Q_g+1}\right)^{2t_k-k-1}
= \frac{M^2}{3 L_f} \sum_{k=0}^{N-1}  \left(\frac{Q_g-1}{Q_g+1}\right)^{k+1} \le \frac{(Q_g-1) M^2}{6 L_f},\label{gamma_ineq}
\end{align}
which together with \eqnok{bg_strng_cvx}, imply \eqnok{bg_strng_cvx1}. Second, observe that with the choice of $\alpha_k$ in \eqnok{alpha_beta1} and for any $\rho \in (0,1)$, we have
\begin{align}
&\sum_{k=0}^{N-1}\frac{1}{2 \alpha_k}\left[\|x^*-x_k\|^2-\|x^*-x_{k+1}\|^2\right] \le \frac{3L_f}{2} \|x^*-x_0\|^2, \nn \\
&\sum_{k=0}^{N-1}  \rho^{\sqrt[4]{k+1}} \le \sum_{k=1}^{\lfloor \sqrt[4]{N} \rfloor} [(k+1)^4-k^4]\rho^{k} \le \frac{15 \rho}{(1-\rho)^4},
\label{power_sum}
\end{align}
where the equality follows from \eqnok{def_Gamma}. Combining the above observations with the choice of $t_k = \lceil \sqrt[4]{k+1} \rceil$ in \eqnok{bg_cvx}, we obtain \eqnok{bg_cvx}. Third, noting \eqnok{alpha_beta1} and the choices of $t_k = \lceil \sqrt[4]{k+1}/2 \rceil$ , we have
\begin{align}
&\sum_{k=0}^{N-1} \frac{\alpha_k}{2} (1-2L_f \alpha_k)  = \frac{N}{18 L_f}, \nn \\
&\sum_{k=0}^{N-1} \left[\alpha_k(1+2L_f\alpha_k)A_k \right] \le \frac{5M^2}{9 L_f} \sum_{k=0}^{N-1}
\left(\frac{Q_g-1}{Q_g+1}\right)^{\sqrt[4]{k+1}} \le \frac{25(Q_g-1)(Q_g+1)^3 M^2}{96 L_f},\nn
\end{align}
where the last inequality follows similarly to \eqnok{power_sum}. Combining the above relations with \eqnok{bg_nocvx1}, and in the view of
\[
\bbe\left[\|\nabla f(x_R;y^*(x_R))\|^2\right] = \frac{\sum_{k=0}^{N-1} \|\nabla f(x_k;y^*(x_k))\|^2}{N},
\]
we obtain \eqnok{bg_nocvx}.
\end{proof}

\vgap
we make a few remarks bout the above results in Corollary~\ref{lemma_main_bg}. First, observe that the total number of iterations performed by the inner loop till the $k$-th iteration of the outer loop is $\sum_{i=0}^{k}t_i$, which together with the choice of $t_k=k+1$, \eqnok{bg_strng_cvx1}, and \eqnok{def_param} imply that the iteration complexities of Algorithm~\ref{alg_BG} to find an $\epsilon$ solution of problem \eqnok{main_prob}, in the view of Definition~\ref{def_complex}, are bounded by
\beq \label{complex_bnd_strcvx}
GC(f,\epsilon) = HC(g,\epsilon) = \max\left\{\frac{L_f}{\mu_f},\frac{L_g}{\mu_g}\right\} {\cal O} \left(\log \frac{1}{\epsilon}\right), \qquad GC(g,\epsilon)= GC(f,\epsilon)^2,
\eeq
when $f$ is strongly convex. Note that $GC(f,\epsilon)$ is in the same order of the optimal complexity bound for smooth strongly convex optimization. Second, similarly \eqnok{bg_cvx1} implies that the above bounds are change to
\beq \label{complex_bnd_cvx}
GC(f,\epsilon) = HC(g,\epsilon) = \max\left\{L_f D_X^2,\frac{Q_g^8 C^2 M^2}{L_f}\right\} {\cal O} \left(\frac{1}{\epsilon}\right), \qquad GC(g,\epsilon)= GC(f,\epsilon)^{\tfrac{5}{4}},
\eeq
when $f$ is only convex.  Note that is $GC(f,\epsilon)$ similar to the complexity bound of the gradient descent applied to convex programming. Third, when $f$ is possibly nonconvex, to find an $\epsilon$ solution in terms of where the expectation is taken with respect to the uniform distribution, Algorithm~\ref{alg_BG} exhibits complexity bounds in the order of
\beq \label{complex_bnd_nocvx}
GC(f,\epsilon) = HC(g,\epsilon) = \max\left\{L_f D_X^2,\frac{Q_g^4 C^2 M^2}{L_f}\right\} {\cal O} \left(\frac{1}{\epsilon}\right), \qquad GC(g,\epsilon)= GC(f,\epsilon)^{\tfrac{5}{4}},
\eeq
Fourth, note that the aforementioned complexity bounds are obtained through a unified analysis in the sense that Algorithm~\ref{alg_BG} is using one stepsize policy and is implemented regardless of the convexity of $f$. However, its complexity behaviour clearly depends on the convexity of $f$. To the best of our knowledge, this is the first time that iteration complexity bounds are provided for iterative algorithms when applied to bilevel optimization problems.

{\color{black}Finally, note that the first complexity bound in \eqnok{complex_bnd_cvx} (in terms of gradient computation of $f$) does not match the lower bound of ${\cal O}(1/\sqrt{\epsilon})$ for convex programming \cite{nemyud:83}. This motivates us to use an acceleration scheme, similar to the classic convex programming, to improve this complexity bound.

\subsection{The Accelerated Bilevel Approximation Method}\label{ABA_sec}

In this subsection, we first present an accelerated variant of Algorithm~\ref{alg_BG} and then present its convergence analysis.}

\begin{algorithm} [H]
	\caption{The Accelerated Bilevel Approximation (ABA) Method}
	\label{alg_abg}
	\begin{algorithmic}

\STATE Input:
$x_0 \in X$, $y_0 \in \bbr^m$ nonnegative sequences $\{\theta_k\}_{k \ge 0} \in (0,1]$, $\{\alpha_k\}_{k \ge 0}$, $\{\lambda_k\}_{k \ge 0}$, $\{\beta_t\}_{t \ge 0}$, and integer sequence $\{t_k\}_{k \ge 0}$.

\STATE Set $k=0$, $x^{ag}_0=x_0$, and $\bar y_0 =y_0$.

{\bf For $k=0,1,\ldots$:}

\vgap

{\addtolength{\leftskip}{0.2in}

\STATE Set
\beq \label{def_x_md}
\eta_k = \frac{\theta_k(\mu_f+\lambda_k) - \theta_k^2 \mu_f}{\mu_f+\lambda_k - \theta_k^2 \mu_f} \qquad \text{and} \qquad x^{md}_k = \eta_k x_k + (1-\eta_k) x^{ag}_k.
\eeq
{\bf For $t=0,1,\ldots, t_k-1$:}

}
\vgap

{\addtolength{\leftskip}{0.4in}

\STATE Set
\beq \label{def_yt}
y_{t+1} = y_t- \beta_t \nabla_y g(x^{md}_k,y_t).
\eeq

}

{\addtolength{\leftskip}{0.2in}
{\bf End}

\STATE  Set $\bar y_k=y_{t_k}$ and compute $\bar \nabla f(x^{md}_k;\bar y_k)$ according to \eqnok{grad_f} and set

\beqa
x_{k+1} &=& \arg\min_{u \in X} \left\{\langle \bar \nabla f(x^{md}_k;\bar y_k),u \rangle + \frac{\mu_f}{4}\|u-x^{md}_k\|^2 +\frac{(1-\theta_k)\mu_f+\lambda_k}{4 \theta_k}\|u-x_k\|^2 \right\},\label{def_xk_ac}\\
x^{ag}_{k+1} &=& \arg\min_{u \in X} \left\{\langle \bar \nabla f(x^{md}_k;\bar y_k),u \rangle + \frac{1}{2 \alpha_k}\|u-x^{md}_k\|^2 \right\}.\label{def_xag}
\eeqa

}
{\bf End}
	\end{algorithmic}
\end{algorithm}

\vgap

Note that if $\bar \nabla f(x^{md}_k;\bar y_k) = \nabla f(x^{md}_k;y^*(x^{md}_k))$, then \eqnok{def_x_md}, \eqnok{def_xk_ac}, and \eqnok{def_xag} form a variant of the accelerated gradient method proposed by Nesterov~\cite{Nest04}. Moreover, the above algorithm similarly to Algorithm~\ref{alg_BG} has two nested loops where the inner ones are the same. Indeed, the acceleration scheme is implemented through the outer loop. Since the inner function $g$ in problem \eqnok{main_prob} is always assumed to be strongly convex, incorporating acceleration scheme into the inner loop of Algorithm~\ref{alg_abg} only improves the complexity bounds in terms of the dependence on the condition number of $g$. Hence, for sake of simplicity, we still apply the gradient method in the inner loop of Algorithm~\ref{alg_abg}.
Below, we present the main convergence properties of this algorithm.

\begin{theorem}[Convergence results for the ABA algorithm]\label{theom_main_abg}
Suppose that $\{\bar y_k, x_k, x^{md}_k, x^{ag}_k \}_{k \ge 0}$ is generated by Algorithm~\ref{alg_abg}, Assumptions~\ref{f_assumption} and ~\ref{g_assumption} hold, stepsizes are chosen such that \eqnok{alpha_beta} holds, and
\beq \label{alpha_lambda}
\theta_k^2 \le \frac{\alpha_k(\mu_f+\lambda_k)}{4} \ \ \forall k \ge 0.
\eeq
\begin{itemize}
\item [a)] If $f$ is strongly convex with parameter $\mu_f>0$ and
\beq \label{alpha_lambda1}
\frac{\lambda_0}{\Gamma_1} = \frac{\lambda_1}{\Gamma_2} = \ldots,
\eeq
we have, for any $N \ge 1$,
\beq
f(x^{ag}_N;y^*(x^{ag}_N))-f^* \le \Gamma_N \left[f(x_0;y^*(x_0))-f^* + \frac{\mu_f+\lambda_0 \Gamma^{-1}_1}{4}\|x^*-x_0\|^2 +\frac{C^2}{2\mu_f} \sum_{k=0}^{N-1}\frac{(6\theta_k + \alpha_k \mu_f) A_k}{\Gamma_{k+1}} \right],\label{abg_strng_cvx}
\eeq
where $A_k$ is given by \eqnok{def_Ak} and $\Gamma_k$ is defined in \eqnok{def_Gamma} with
\beq\label{def_gama2}
0<\gamma_k \le \frac{\sqrt{\alpha_k \mu_f}}{2} \qquad \forall k \ge 0.
\eeq

\item [b)] If $f$ is convex, $X$ is bounded, and \eqnok{alpha_lambda1} holds, we have
\begin{align}
f(x^{ag}_N;y^*(x^{ag}_N))-f^* &\le \Gamma_N \left[\frac{(1-\gamma_0)[f(x_0;y^*(x_0))-f^*]}{\Gamma_1}+\frac{\lambda_{0}}{\Gamma_1} \|x^*-x_0\|^2 \right. \nn \\
& \qquad \qquad \qquad \qquad \qquad  + \left. \sum_{k=0}^{N-1} \frac{C}{\Gamma_{k+1}} \left(\theta_k D_X \sqrt{A_k}+\frac{C \alpha_k A_k}{2} \right)\right],\label{abg_cvx}
\end{align}
where $\Gamma_k$ is defined for $\gamma_k = \theta_k$.
\end{itemize}
\end{theorem}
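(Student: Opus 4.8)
The plan is to adapt the standard inexact accelerated gradient analysis to the bilevel setting, where the oracle $\bar\nabla f(x^{md}_k;\bar y_k)$ plays the role of the true gradient $\nabla f(x^{md}_k;y^*(x^{md}_k))$ up to the controllable error $\Delta_k := \bar\nabla f(x^{md}_k;\bar y_k)-\nabla f(x^{md}_k;y^*(x^{md}_k))$. I would repeatedly use two facts established earlier: by Lemma~\ref{grad_f_error}a), i.e.\ \eqnok{def_grad_error}, together with the inner-loop contraction \eqnok{y_cnvrg} of Lemma~\ref{lemma_gd}, one has $\|\Delta_k\|^2\le C^2 A_k$ with $A_k$ as in \eqnok{def_Ak} (now evaluated at $x^{md}_k$); and by Lemma~\ref{grad_f_error}c), i.e.\ \eqnok{def_f_smooth}, the map $x\mapsto\nabla f(x;y^*(x))$ is $L_f$-Lipschitz.

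First I would write the per-iteration inequality. Applying $L_f$-smoothness to the pair $(x^{md}_k,x^{ag}_{k+1})$ gives $f(x^{ag}_{k+1};y^*(x^{ag}_{k+1}))\le f(x^{md}_k;y^*(x^{md}_k))+\langle\nabla f(x^{md}_k;y^*(x^{md}_k)),x^{ag}_{k+1}-x^{md}_k\rangle+\frac{L_f}{2}\|x^{ag}_{k+1}-x^{md}_k\|^2$, into which I substitute $\nabla f=\bar\nabla f-\Delta_k$. The linear term in $\bar\nabla f$ is then handled through the optimality (three-point) inequalities of the two prox subproblems \eqnok{def_xag} and \eqnok{def_xk_ac}: since \eqnok{def_xk_ac} is strongly convex, its optimality condition produces a telescoping expression in $\|x^*-x_k\|^2-\|x^*-x_{k+1}\|^2$ after inserting $u=x^*$ (and, in the strongly convex case, a convex combination of $x^*$ and $x^{ag}_k$). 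The choice of $\eta_k$ in \eqnok{def_x_md} is precisely what makes the cross terms line up, allowing me to express $x^{ag}_{k+1}-x^{md}_k$ through $\theta_k(x_{k+1}-x_k)$ so that the smoothness quadratic $\frac{L_f}{2}\|x^{ag}_{k+1}-x^{md}_k\|^2$ is absorbed into the strongly convex prox term, using $\alpha_k\le 1/L_f$ from \eqnok{alpha_beta} and the condition $\theta_k^2\le\alpha_k(\mu_f+\lambda_k)/4$ from \eqnok{alpha_lambda}.

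Next I would invoke (strong) convexity of $f$ to convert the surviving linear term $\langle\nabla f(x^{md}_k;y^*(x^{md}_k)),u-x^{md}_k\rangle$ into the function-value gap $f(u;y^*(u))-f(x^{md}_k;y^*(x^{md}_k))$ (minus $\frac{\mu_f}{2}\|u-x^{md}_k\|^2$ in the strongly convex case). After rearranging, this yields a one-step recursion of the form: $f(x^{ag}_{k+1})-f^*$ plus a weighted prox potential at $k+1$ is bounded by $(1-\gamma_k)[f(x^{ag}_k)-f^*]$ plus the potential at $k$ plus an error term. The errors come from bounding $\langle\Delta_k,\cdot\rangle$ via Cauchy--Schwarz and Young's inequality: for part a) I balance $\|\Delta_k\|$ against the $\mu_f$-strong convexity to produce the $\frac{C^2}{2\mu_f}(6\theta_k+\alpha_k\mu_f)A_k$ contribution, while for part b) I use boundedness of $X$ (diameter $D_X$) to get $\theta_k D_X\sqrt{A_k}$ together with the prox-step term $\frac{C^2\alpha_k A_k}{2}$. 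Dividing by $\Gamma_{k+1}$ and telescoping via \eqnok{def_Gamma}, and using \eqnok{alpha_lambda1} to make the $\lambda_k$-weighted quadratic terms telescope cleanly, delivers \eqnok{abg_strng_cvx} and \eqnok{abg_cvx}; the conditions \eqnok{def_gama2} on $\gamma_k$ ($\gamma_k\le\sqrt{\alpha_k\mu_f}/2$ strongly convex, $\gamma_k=\theta_k$ convex) arise from matching the per-step contraction factor to the telescoping.

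The hard part will be the exact coefficient bookkeeping in two places: verifying that the specific $\eta_k$ in \eqnok{def_x_md} produces the interpolation identity that couples the descent step \eqnok{def_xag} with the prox step \eqnok{def_xk_ac}, and checking that \eqnok{alpha_lambda} is strong enough to absorb the $\frac{L_f}{2}$ smoothness quadratic into the strongly convex prox penalty. A secondary subtlety is that the inner loop \eqnok{def_yt} is run at $x^{md}_k$ rather than at $x_k$, so $A_k$ here must be read with $y^*(x^{md}_k)$; the bound $\|\Delta_k\|^2\le C^2 A_k$ nevertheless carries through unchanged because \eqnok{def_grad_error} is stated for an arbitrary pair $(\bar x,\bar y)$. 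Everything remaining is routine telescoping once the recursion is in place.
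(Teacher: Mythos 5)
Your proposal follows essentially the same route as the paper's proof: the error term $\Delta^{md}_k$ bounded by $C\sqrt{A_k}$ via Lemma~\ref{grad_f_error}.a) and Lemma~\ref{lemma_gd}, the $L_f$-smoothness inequality at $(x^{md}_k,x^{ag}_{k+1})$, the two three-point inequalities for \eqnok{def_xk_ac} and \eqnok{def_xag} combined through the $\eta_k$-interpolation identity \eqnok{cvx_com}, absorption of the quadratic terms via \eqnok{alpha_beta} and \eqnok{alpha_lambda}, Young-type bounds on $\langle\Delta^{md}_k,\cdot\rangle$ yielding exactly the $(6\theta_k+\alpha_k\mu_f)/(2\mu_f)$ and $\theta_k D_X\sqrt{A_k}+C\alpha_k A_k/2$ error coefficients, and telescoping with $\Gamma_k$ under \eqnok{alpha_lambda1}. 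Your remark that $A_k$ must be read with $y^*(x^{md}_k)$ (since the inner loop of Algorithm~\ref{alg_abg} runs at $x^{md}_k$) is a correct refinement that the paper itself leaves implicit.
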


\vgap

\begin{proof}
We first show part a). Noting strong convexity of subproblems \eqnok{def_xk_ac} and \eqnok{def_xag}, we have
\begin{align}
&\langle \bar \nabla f(x^{md}_k;\bar y_k),x_{k+1}-x \rangle \le \frac{\mu_f}{4}\left[\|x-x^{md}_k\|^2-\|x-x_{k+1}\|^2-\|x_{k+1}-x^{md}_k\|^2 \right] \nn \\
& \qquad \qquad \qquad \qquad+ \frac{(1-\theta_k)\mu_f+\lambda_k}{4 \theta_k} \left[\|x-x_k\|^2-\|x-x_{k+1}\|^2-\|x_{k+1}-x_k\|^2 \right]
\ \ \forall x \in X, \label{abg_proof1} \\
&\langle \bar \nabla f(x^{md}_k;\bar y_k),x^{ag}_{k+1}-u \rangle \le \frac{1}{2 \alpha_k}\left[\|u-x^{md}_k\|^2-\|u-x^{ag}_{k+1}\|^2-\|x^{ag}_{k+1}-x^{md}_k\|^2 \right] \ \ \forall  u \in X.\label{abg_proof2}
\end{align}
Setting $u= (1-\theta_k) x^{ag}_k+\theta_k x_{k+1}$, noting \eqnok{def_x_md}, and convexity of $\|\cdot\|^2$, we have
\beqa
\|u - x^{md}_k\|^2 
&=& \left\|\frac{\eta_k(1-\theta_k)}{1-\eta_k}  (x_{k+1}-x_k) + \frac{\theta_k-\eta_k}{1-\eta_k} (x_{k+1} -x^{md}_k)\right\|^2 \nn \\
&\le& \frac{\eta_k \theta_k(1-\theta_k)}{1-\eta_k}\|x_{k+1}-x_k\|^2 + \frac{\theta_k(\theta_k-\eta_k)}{1-\eta_k} \|x_{k+1} -x^{md}_k\|^2 \nn \\
&=& \theta_k^2\left[\left(1-\frac{\theta_k \mu_f}{\mu_f+\lambda_k}\right)\|x_{k+1}-x_k\|^2 + \frac{\theta_k \mu_f}{\mu_f+\lambda_k} \|x_{k+1} -x^{md}_k\|^2\right].
\label{cvx_com}
\eeqa
Moreover, noting the smoothness of $f$ due to Lemma~\ref{grad_f_error}.c), we have
\beq\label{fag_smooth}
f(x^{ag}_{k+1};y^*(x^{ag}_{k+1})) \le f(x^{md}_k;y^*(x^{md}_k)) + \langle \nabla f(x^{md}_k;y^*(x^{md}_k)),x^{ag}_{k+1}-x^{md}_k \rangle +\frac{L_f}{2}\|x^{ag}_{k+1}-x^{md}_k\|^2.
\eeq
Multiplying \eqnok{abg_proof1} by $\theta_k$, adding it up with \eqnok{abg_proof2} and \eqnok{fag_smooth} by noting \eqnok{cvx_com}, and denoting $\Delta^{md}_k \equiv \bar \nabla f(x^{md}_k;\bar y_k)-\nabla f(x^{md}_k;y^*(x^{md}_k))$, we obtain
\begin{align}
&f(x^{ag}_{k+1};y^*(x^{ag}_{k+1})) \le (1-\theta_k) [f(x^{md}_k;y^*(x^{md}_k)) + \langle \nabla f(x^{md}_k;y^*(x^{md}_k)),x^{ag}_k-x^{md}_k \rangle] \nn \\
& + \theta_k [f(x^{md}_k;y^*(x^{md}_k)) + \langle \nabla f(x^{md}_k;y^*(x^{md}_k)),x-x^{md}_k \rangle + \frac{\mu_f}{4}\|x-x^{md}_k\|^2] \nn \\
&  -\frac{\theta_k \mu_f}{4}\|x-x_{k+1}\|^2 -\frac{1}{4} \left(1-\frac{2\theta_k^2}{\alpha_k(\mu_f+\lambda_k)}\right)\left[\theta_k \mu_f\|x_{k+1}-x^{md}_k\|^2 + [(1-\theta_k)\mu_f+\lambda_k]\|x_{k+1}-x_k\|^2\right]\nn \\
&  - \frac{1}{2 \alpha_k}\left[(1-L_f \alpha_k)\|x^{ag}_{k+1}-x^{md}_k\|^2 +\|u-x^{ag}_{k+1}\|^2 \right]
 +\frac{(1-\theta_k)\mu_f+\lambda_k}{4} \left[\|x-x_k\|^2-\|x-x_{k+1}\|^2\right] \nn \\
& +\langle \Delta^{md}_k , \theta_k(x_{k+1}-x)+x^{ag}_{k+1}-u \rangle,\label{abg_proof2}
\end{align}
which together with \eqnok{alpha_lambda}, strong convexity of $f$, the fact that
\begin{align}
&\langle \Delta^{md}_k , \theta_k(x_{k+1}-x)+x^{ag}_{k+1}-u \rangle = \langle \Delta^{md}_k , \theta_k(x_{k+1}-x^{md}_k)+\theta_k(x^{md}_k-x )+x^{ag}_{k+1}-u \rangle   \nn \\
&\le \left(\frac{3\theta_k}{\mu_f}+\frac{\alpha_k}{2}\right) \|\Delta^{md}_k\|^2   + \frac{\theta_k \mu_f}{4}\left[\frac{1}{2}\|x_{k+1}-x^{md}_k\|^2+ \|x-x^{md}_k\|^2\right] + \frac{1}{2\alpha_k}\|u-x^{ag}_{k+1}\|^2,\nn
\end{align}
imply that
\beqa
f(x^{ag}_{k+1};y^*(x^{ag}_{k+1}))&\le& (1-\theta_k) f(x^{ag}_k;y^*(x^{ag}_k)) + \theta_k f(x) +\frac{(1-\theta_k)\mu_f+\lambda_k}{4}\|x-x_k\|^2 \nn \\
&-& \frac{\lambda_k+ \mu_f}{4}\|x-x_{k+1}\|^2+\frac{6\theta_k \mu_f \alpha_k}{2\mu_f}\|\Delta^{md}_k\|^2.\label{strong_recur_acc}
\eeqa
Letting $x= x^*$ in the above inequality, subtracting $f(x^*;y^*(x^*))$ form both sides, noting \eqnok{alpha_lambda}, \eqnok{def_gama2}, and after re-arranging the terms, we obtain
\[
e_{k+1} \le (1-\gamma_k) e_k +\frac{\lambda_k}{4}\left[\|x-x_k\|^2-\|x-x_{k+1}\|^2 \right] + \frac{6\theta_k +\mu_f \alpha_k}{2\mu_f}\|\Delta^{md}_k\|^2,
\]
where $e_k= f(x^{ag}_k;y^*(x^{ag}_k))- f(x^*;y^*(x^*))+ \frac{\mu_f}{4}\|x-x_k\|^2$. Divining both sides of the above inequality by $\Gamma_{k+1}$, noting \eqnok{alpha_lambda1}, \eqnok{y_cnvrg}, \eqnok{def_grad_error}, and summing them up, we obtain \eqnok{abg_strng_cvx}.\\
We now show part b). If $f$ is convex, then setting $\mu_f=0$ in \eqnok{abg_proof2} and similar to \eqnok{strong_recur_acc}, we obtain
\beqa
f(x^{ag}_{k+1};y^*(x^{ag}_{k+1})) &\le& (1-\theta_k) f(x^{ag}_k;y^*(x^{ag}_k))+\theta_k f(x;y^*(x))+\frac{\lambda_k}{4} \left[\|x-x_k\|^2-\|x-x_{k+1}\|^2\right] \nn \\
&+& \theta_k \|x-x_{k+1}\| \|\Delta^{md}_k\|+ \frac{\alpha_k}{2} \|\Delta^{md}_k\|^2.\label{cvx_recur_acc}
\eeqa
Noting \eqnok{alpha_lambda1} and boundedness of $X$, \eqnok{abg_cvx} follows similarly to part a).
\end{proof}

\vgap

{\color{black}In the next result, we specialize the rates of convergence of Algorithm~\ref{alg_abg} by properly choosing the algorithm parameters.}

\begin{corollary} \label{corl_main_abg}
Suppose that $\{\bar y_k, x_k, x^{md}_k, x^{ag}_k \}_{k \ge 0}$ is generated by Algorithm~\ref{alg_abg}, Assumptions~\ref{f_assumption} and ~\ref{g_assumption} hold, stepsizes are chosen according to \eqnok{alpha_beta}, \eqnok{alpha_beta1}, and
\beq \label{alpha_lambda2}
\lambda_k = \frac{8 \Gamma_{k+1}}{\alpha_k} \qquad \forall k \ge 0.
\eeq
\begin{itemize}
\item [a)] If $f$ is strongly convex with parameter $\mu_f>0$, $t_k =k+1$, and
\beq \label{alpha_lambda3}
\theta_k^2 = \frac{\alpha_k \mu_f}{4}+\bar \Gamma_{k+1},
\eeq
where $\bar \Gamma_k = \Gamma_k$ with the choice of $\gamma_k =\theta_k$, we have, for any $N \ge 1$,
\begin{align}
f(x_N;y^*(x_N))-f^* &\le (1-\gamma)^N \left[f(x_0;y^*(x_0))-f^* + \frac{(\mu_f+12L_f)\|x^*-x_0\|^2}{4}+\frac{7(Q_g-1) M^2 C^2}{4 \mu_f} \right],\label{abg_strng_cvx2}
\end{align}
where
\beq \label{def_param2}
\gamma_k = \gamma = \min \left(\frac12 \sqrt{\frac{\mu_f}{3L_f}},\frac{2}{Q_g+1}\right) \quad \forall k \ge 0.
\eeq

\item [b)] If $f$ is convex, $X$ is bounded, $t_k =\sqrt{k+1}$, and
\beq \label{alpha_lambda4}
\gamma_k = \theta_k = \frac{2}{k+2} \qquad \forall k \ge 0,
\eeq
we have
\begin{align}
f(x_N;y^*(x_N))-f^* &\le \frac{2}{N(N+1)} \left[15 L_f D_X^2 + \frac{16 (Q_g-1)^2(Q_g+1)^6 C^2 M^2}{L_f}\right].\label{abg_cvx2}
\end{align}

\end{itemize}
\end{corollary}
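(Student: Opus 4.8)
The plan is to specialize the two master recursions of Theorem~\ref{theom_main_abg} --- bounds \eqnok{abg_strng_cvx} and \eqnok{abg_cvx} --- to the concrete parameter choices, exactly as Corollary~\ref{lemma_main_bg} was obtained from Theorem~\ref{theom_main_bg}. Before substituting I would check that the chosen sequences are admissible. The choice $\alpha_k = 1/(3L_f)$ in \eqnok{alpha_beta1} clearly satisfies \eqnok{alpha_beta}; with $\lambda_k = 8\Gamma_{k+1}/\alpha_k$ from \eqnok{alpha_lambda2} the ratio $\lambda_k/\Gamma_{k+1} = 8/\alpha_k$ is constant, so \eqnok{alpha_lambda1} holds; and $\alpha_k(\mu_f+\lambda_k)/4 = \alpha_k\mu_f/4 + 2\Gamma_{k+1}$, so the admissibility condition \eqnok{alpha_lambda} reads $\theta_k^2 \le \alpha_k\mu_f/4 + 2\Gamma_{k+1}$. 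In part (a), $\theta_k^2 = \alpha_k\mu_f/4 + \bar\Gamma_{k+1}$ by \eqnok{alpha_lambda3}, and since $\theta_k\ge\tfrac12\sqrt{\alpha_k\mu_f}\ge\gamma$ the product defining $\bar\Gamma_{k+1}$ is factorwise dominated by that defining $\Gamma_{k+1}$, giving $\bar\Gamma_{k+1}\le\Gamma_{k+1}\le 2\Gamma_{k+1}$; in part (b), where $\mu_f=0$, condition \eqnok{alpha_lambda} reduces to $\theta_k^2\le 2\Gamma_{k+1}$, which is immediate from the telescoped value of $\Gamma_{k+1}$ computed below.

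For part (a) I would first observe that the constant $\gamma$ in \eqnok{def_param2} equals $\min(\tfrac12\sqrt{\alpha_k\mu_f},\,2/(Q_g+1))$ under $\alpha_k=1/(3L_f)$, hence satisfies \eqnok{def_gama2}, so \eqnok{def_Gamma} collapses to $\Gamma_N=(1-\gamma)^N$ --- the claimed geometric prefactor. It then remains to bound the two bracketed constants in \eqnok{abg_strng_cvx}. The initial coefficient is handled by $\lambda_0/\Gamma_1 = 8/\alpha_0 = 24L_f$, which controls $\tfrac14(\mu_f+\lambda_0\Gamma_1^{-1})$. For the error-accumulation sum the key is the inequality $1-\gamma\ge\rho:=(Q_g-1)/(Q_g+1)$, so that $\Gamma_{k+1}^{-1}\le\rho^{-(k+1)}$, while the choice $t_k=k+1$ gives $A_k \le M^2\rho^{2(k+1)}$ by \eqnok{def_Ak}; hence $A_k/\Gamma_{k+1}\le M^2\rho^{k+1}$ is genuinely geometric. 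Because $\theta_k$ is uniformly bounded (from \eqnok{alpha_lambda3} and $\bar\Gamma_{k+1}\le 1$), the weight $6\theta_k+\alpha_k\mu_f$ is bounded, and summing $\sum_{k\ge0}\rho^{k+1}=\rho/(1-\rho)=(Q_g-1)/2$ produces a bound of order $(Q_g-1)M^2C^2/\mu_f$; collecting constants yields \eqnok{abg_strng_cvx2}.

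For part (b) the first simplification is that $\gamma_0=\theta_0=1$, which by \eqnok{def_Gamma} forces $\Gamma_1=1$ and annihilates the first bracketed term of \eqnok{abg_cvx}; the remaining product telescopes, $\Gamma_{k+1}=\prod_{i=1}^{k}\tfrac{i}{i+2}=\tfrac{2}{(k+1)(k+2)}$, so $\Gamma_N=2/(N(N+1))$ --- the advertised $O(1/N^2)$ rate --- together with $\theta_k/\Gamma_{k+1}=k+1$ and $\Gamma_{k+1}^{-1}=(k+1)(k+2)/2$. Substituting $t_k=\sqrt{k+1}$ turns the accumulation term into the two series $CD_XM\sum_k(k+1)\rho^{\sqrt{k+1}}$ and $\tfrac{C^2M^2}{6L_f}\sum_k\tfrac{(k+1)(k+2)}{2}\rho^{2\sqrt{k+1}}$. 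I expect the main obstacle to be bounding these: the polynomial prefactors $k$ and $k^2$ now grow, and one must confirm that the sub-linear exponent $\sqrt{k+1}$ still forces convergence. This is done exactly as in \eqnok{power_sum}, by grouping the indices with $\lfloor\sqrt{k+1}\rfloor=j$ --- there are $O(j)$ of them, each with $k+1=O(j^2)$ --- and re-summing a convergent series $\sum_j\mathrm{poly}(j)\rho^{j}$, whose closed form supplies the powers of $(Q_g+1)$ appearing in \eqnok{abg_cvx2}. Finally, since the first series carries the mixed factor $CD_XM$ whereas the target bound contains only $L_fD_X^2$ and $C^2M^2/L_f$, I would split it via Young's inequality, $CD_XM\le\tfrac12(L_fD_X^2+C^2M^2/L_f)$, and gather terms to reach \eqnok{abg_cvx2}.
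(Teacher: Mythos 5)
Your proposal is correct and follows essentially the same route as the paper's own proof: verify that the parameter choices satisfy \eqnok{alpha_lambda}, \eqnok{alpha_lambda1}, and \eqnok{def_gama2}, then specialize \eqnok{abg_strng_cvx} and \eqnok{abg_cvx}, bounding the error sums exactly as in \eqnok{gamma_ineq} and \eqnok{power_sum} (the paper leaves the Young-type splitting of the $C D_X M$ term implicit, which you make explicit). One remark: your computation $\lambda_0/\Gamma_1 = 8/\alpha_0 = 24 L_f$ is faithful to \eqnok{alpha_lambda2} as stated and yields the constant $(\mu_f + 24 L_f)/4$ rather than the claimed $(\mu_f + 12 L_f)/4$; this factor-of-two discrepancy is the paper's own inconsistency, since its proof asserts $\lambda_k/\Gamma_{k+1} = 4/\alpha_k$, which contradicts \eqnok{alpha_lambda2}.
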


\vgap

\begin{proof}
First, we show that the stepsizes are well-defined. Observe that by by \eqnok{def_Gamma}, \eqnok{alpha_lambda2}, \eqnok{alpha_lambda3}, and \eqnok{def_param2}, we have
\[
\gamma_k \le \theta_k, \qquad \bar \Gamma_{k+1} \le \Gamma_{k+1}, \qquad \frac{\lambda_k}{\Gamma_{k+1}} = \frac{4}{\alpha_k} \qquad \forall k \ge 0
\]
which ensures conditions \eqnok{alpha_lambda} and \eqnok{alpha_lambda1} due to the choice of $\alpha_k$ in \eqnok{alpha_beta1}. Moreover, we have
$\theta_k^2 = \alpha_k \mu_f/4 + (1-\theta_k) \bar \Gamma_k \ \ \forall k \ge 0$, which implies that
\beq \label{theta_k}
\theta_k  = \frac{-\bar \Gamma_k+\sqrt{\bar \Gamma_k^2 +4\bar \Gamma_k+\alpha_k \mu_f}}{2} \qquad \forall k \ge 0,
\eeq
and $\theta_k \in (0,1)$. Now, noting \eqnok{gamma_ineq}, \eqnok{def_param2}, the fact that $6\theta_k +\alpha_k \mu_f \le 7$, and \eqnok{abg_strng_cvx}, we obtain \eqnok{abg_strng_cvx2}.

Second, noting \eqnok{def_Gamma} and \eqnok{alpha_lambda4}, for any $\rho \in (0,1)$, we have
\begin{align}
& \Gamma_N = \frac{2}{N(N+1)} \qquad \forall N \ge 1,\nn \\
& \sum_{k=0}^{N-1} \frac{1}{\Gamma_{k+1}} \left(\theta_k \rho^{\sqrt{k+1}}+\frac{\alpha_k}{2} \rho^{2\sqrt{k+1}} \right) = \sum_{k=1}^{N} \left( k \rho^{\sqrt{k}} + \frac{k(k+1)}{12 L_f}\rho^{2\sqrt{k}} \right)
 \le \frac{288 \rho}{(1-\rho^2)^4}+\frac{840 \rho^2}{L_f(1-\rho^2)^6},
\end{align}
which together with \eqnok{abg_cvx}, imply \eqnok{abg_cvx2}.
\end{proof}

\vgap

Note that \eqnok{abg_strng_cvx2} implies that when $f$ is strongly convex, Algorithm~\ref{alg_abg} can slightly improve the iteration complexity bounds of Algorithm~\ref{alg_BG} in \eqnok{complex_bnd_strcvx} to
\beq \label{complex_bnd_strcvx_acc}
GC(f,\epsilon) = HC(g,\epsilon) = \max\left\{\sqrt{\frac{L_f}{\mu_f}},\frac{L_g}{\mu_g}\right\} {\cal O} \left(\log \frac{1}{\epsilon}\right), \qquad GC(g,\epsilon)= GC(f,\epsilon)^2.
\eeq
Moreover, \eqnok{abg_cvx2} implies that the accelerated variant of Algorithm~\ref{alg_BG} can significantly improve its complexity bounds in \eqnok{complex_bnd_cvx} to
\beq \label{complex_bnd_cvx_acc}
GC(f,\epsilon) = HC(g,\epsilon) = \max\left\{\sqrt{L_f D_X^2},\frac{Q_g^4 C M}{\sqrt{L_f}}\right\} {\cal O} \left(\frac{1}{\sqrt \epsilon}\right), \qquad GC(g,\epsilon)= GC(f,\epsilon)^{\tfrac{3}{2}},
\eeq
when $f$ is only convex.  It should be mentioned that since the acceleration does not change the complexity bounds in \eqnok{complex_bnd_nocvx} when $f$ is possibly nonconvex and hence we do not repeat that result.

\section{Stochastic Approximation Methods for Bilevel Programming}\label{sab_section}
In this section, we study the bilevel programming problem under the stochastic setting. In particular, we consider the problem of the form \eqnok{main_prob_st} and suppose that Assumption~\ref{stochastic_assumption} holds. To do so, we first need to compute a stochastic approximation of $\left[\nabla^2_{yy} g\right]^{-1}$. Noting the following well-known result about matrices, we can provide a subroutine to approximate inverse of the Hessians of the inner expectation function in \eqnok{main_prob_st}.

\begin{lemma}\label{lemma_invs}
Let $A$ be a symmetric positive definite matrix such that $\|A\| < 1$. Then, we have\\
\[
A^{-1} = \sum\limits_{i=0}^\infty [I-A]^i.
\]
\end{lemma}

\vgap

Noting the above result, we can provide the following subroutine to compute an approximation for $\left[\nabla^2_{yy} g\right]^{-1}$. It should be mentioned that such approximation has been previously used in different forms (see e.g., \cite{AgrBuHa16}).

\begin{algorithm} [H]
	\caption{The Hessian Inverse Approximation (HIA) Subroutine}
	\label{alg_HIA}
	\begin{algorithmic}

\STATE Input:
$\bar x \in X$, $\bar y \in \bbr^m$, smoothness parameter $L_g$, and positive integer number $b$.

\STATE Choose $p \in \{0,\ldots,b-1\}$ randomly.\\
{\bf For $i=1,\ldots$, $p$:}

{\addtolength{\leftskip}{0.2in}

\STATE Compute Hessian approximations $\nabla^2_{yy} G_i \equiv \nabla^2_{yy} G(\bar x, \bar y , \zeta_{i})$, where $\{\zeta_{i}\}_{i \ge 0}$ are i.i.d samples from $\zeta$.\\
}

{\bf End}
\STATE Set
\beq \label{def_Hyy}
H_{yy} \equiv H_{yy}(\bar x, \bar y, \zeta_{[r]}) = \left\{
\begin{array}{ll}
\tfrac{b}{L_g} \prod\limits_{i=1}^p \left[I - \tfrac{1}{L_g}\nabla^2_{yy} G_i \right] , & p \ge 1,\\
\tfrac{b}{L_g} I, & p = 0.
\end{array} \right.
\eeq
	\end{algorithmic}
\end{algorithm}

In the next result, we evaluate the quality of the above Hessian inverse approximation.

\begin{lemma}\label{lemma_HIA}
Let $H_{yy}$ be the output of the HIA subroutine. Then, under Assumptions~\ref{g_assumption} and ~\ref{stochastic_assumption}, we have
\beqa
\|[\nabla^2_{yy} g (\bar x, \bar y)]^{-1} - \bbe[H_{yy}]\| &\le& \frac{1}{\mu_g} \left(\frac{Q_g-1}{Q_g} \right)^b,\nn \\
\bbe[\|[\nabla^2_{yy} g (\bar x, \bar y)]^{-1}-H_{yy}\|] &\le& \frac{2}{\mu_g}, \label{Hessian_Invs_ineq}
\eeqa
where the expectation is taken with respect to both $p$ and $\zeta$.
\end{lemma}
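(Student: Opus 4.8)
The plan is to reduce everything to the Neumann series identity of Lemma~\ref{lemma_invs}. Write $H := \nabla^2_{yy} g(\bar x,\bar y)$ and $B := I - \tfrac{1}{L_g}H$. Assumptions~\ref{g_assumption}.b) and .c) give $\mu_g I \preceq H \preceq L_g I$, so $0 \preceq B \preceq \tfrac{Q_g-1}{Q_g}I$ and in particular $\|B\| \le \tfrac{Q_g-1}{Q_g} =: q < 1$. Applying Lemma~\ref{lemma_invs} to $A = \tfrac{1}{L_g}H$ (the series converges since $\|B\| = \|I-A\| \le q <1$) yields $H^{-1} = \tfrac{1}{L_g}\sum_{i=0}^{\infty} B^i$.

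First I would identify $\bbe[H_{yy}]$ with a truncation of this series. Conditioning on the random index $p$ and taking expectation over the i.i.d. samples $\zeta_1,\dots,\zeta_p$, independence lets the expectation of the product factor into a product of expectations, and the unbiasedness $\bbe[\nabla^2_{yy}G_i]=H$ from Assumption~\ref{stochastic_assumption}.f) gives $\bbe[H_{yy}\mid p]=\tfrac{b}{L_g}B^p$ for every $p\ge 0$ (the $p=0$ branch matching $B^0=I$). Averaging over $p$ uniform on $\{0,\dots,b-1\}$ cancels the factor $b$ and produces exactly the truncated Neumann sum $\bbe[H_{yy}]=\tfrac{1}{L_g}\sum_{p=0}^{b-1}B^p$.

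The first inequality then follows by subtracting the two series: $H^{-1}-\bbe[H_{yy}]=\tfrac{1}{L_g}\sum_{p=b}^{\infty}B^p$, whose norm is at most the geometric tail $\tfrac{1}{L_g}\tfrac{q^b}{1-q}$; since $1-q=1/Q_g=\mu_g/L_g$, this collapses to $\tfrac{1}{\mu_g}q^b$, as claimed. For the second inequality I would use the triangle inequality $\|H^{-1}-H_{yy}\|\le\|H^{-1}\|+\|H_{yy}\|$ together with $\|H^{-1}\|\le 1/\mu_g$, and bound $\bbe[\|H_{yy}\|]$ by the same geometric idea. The key point here is the \emph{per-realization} spectral bound $\|I-\tfrac{1}{L_g}\nabla^2_{yy}G_i\|\le q$, valid because Assumption~\ref{g_assumption} is imposed on each sampled function $G(\bar x,\cdot,\zeta)$; submultiplicativity then gives the deterministic bound $\|H_{yy}\|\le\tfrac{b}{L_g}q^p$ given $p$, and averaging over $p$ yields $\bbe[\|H_{yy}\|]\le\tfrac{1}{L_g}\sum_{p=0}^{b-1}q^p\le\tfrac{1}{L_g(1-q)}=\tfrac{1}{\mu_g}$, so the sum of the two terms is $2/\mu_g$.

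The step I expect to be the main obstacle is the computation of $\bbe[H_{yy}]$: it requires justifying $\bbe[\prod_i M_i]=\prod_i\bbe[M_i]$ for independent random matrices (immediate entrywise, but worth stating) and recognizing the resulting finite sum as the head of the Neumann expansion of $H^{-1}$. The second bound additionally leans on the convention that Assumptions~\ref{g_assumption}.b),.c) hold for every realization $G(\bar x,\cdot,\zeta)$, not merely in expectation; without this the deterministic control $\|H_{yy}\|\le\tfrac{b}{L_g}q^p$ would fail. All remaining manipulations are routine geometric summations.
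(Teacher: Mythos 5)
Your proposal is correct and follows essentially the same route as the paper's proof: both identify $\bbe[H_{yy}]$ with the truncated Neumann series via independence and unbiasedness, bound the bias by the geometric tail $\tfrac{1}{L_g}\sum_{i\ge b}\|I-\tfrac{1}{L_g}\nabla^2_{yy}g\|^i \le \tfrac{1}{\mu_g}\left(\tfrac{Q_g-1}{Q_g}\right)^b$, and obtain the second bound from the triangle inequality together with the per-realization spectral estimate $\|I-\tfrac{1}{L_g}\nabla^2_{yy}G_i\|\le 1-\mu_g/L_g$. The convention you flag as essential---that Assumption~\ref{g_assumption} holds for each realization $G(\bar x,\cdot,\zeta)$, not merely in expectation---is indeed the paper's stated convention for the stochastic setting, so your argument matches the intended one in all substantive steps.
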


\begin{proof}
First, note that by \eqnok{def_Hyy}, independency of $p$ and $\zeta$, and under Assumption~\ref{stochastic_assumption}, we have
\beqa
\bbe[H_{yy}] &=& \bbe_p \left[\bbe_\zeta[H_{yy}(\bar x, \bar y, \zeta_{[p]})]\right] = \tfrac{b}{L_g}\bbe_r \left[\prod\limits_{i=1}^p \left[I - \tfrac{1}{L_g}\bbe_\zeta[\nabla^2_{yy} G(\bar x, \bar y , \zeta_{i})\right]\right] \nn \\
&=& \tfrac{b}{L_g} \bbe_r \left[I - \tfrac{1}{L_g} \nabla^2_{yy} g(\bar x, \bar y)\right]^p =
\tfrac{1}{L_g}\sum_{i=0}^{b-1} \left[I - \tfrac{1}{L_g}\nabla^2_{yy} g(\bar x, \bar y)\right]^i,\label{lem_st_p1}
\eeqa
where the last equality follows from the uniform distribution of $p$. Second, noting the fact that $I \succeq \tfrac{1}{L_g} \nabla^2_{yy} g \succeq \tfrac{\mu_g}{L_g}$ due to Assumptions~\ref{g_assumption}.b), ~\ref{g_assumption}.c), and in the view of Lemma~\ref{lemma_invs}, we have
\[
[\nabla^2_{yy} g (\bar x, \bar y)]^{-1} =  \tfrac{1}{L_g} \sum_{i=0}^\infty \left[I - \tfrac{1}{L_g}\nabla^2_{yy} g(\bar x, \bar y)\right]^i
= \bbe[H_{yy}] +  \tfrac{1}{L_g}\sum_{i=b}^\infty \left[I - \tfrac{1}{L_g}\nabla^2_{yy} g(\bar x, \bar y)\right]^i,
\]
which implies that
\begin{align}
& \left\|[\nabla^2_{yy} g (\bar x, \bar y)]^{-1} - \bbe[H_{yy}]\right\| \le \tfrac{1}{L_g}\sum_{i=b}^\infty \left\|I - \tfrac{1}{L_g}\nabla^2_{yy} g(\bar x, \bar y)\right\|^i \le \frac{1}{\mu_g} \left(1- \frac{\mu_g}{L_g} \right)^b,\nn \\
&\bbe[\|H_{yy}\|] \le \tfrac{b}{L_g}\bbe \left[\prod\limits_{i=1}^r \|I - \tfrac{1}{L_g}\nabla^2_{yy} G(\bar x, \bar y , \zeta_{i})\|\right] = \tfrac{b}{L_g} \bbe_r \left[1- \tfrac{\mu_g}{L_g}\right]^r=
\tfrac{1}{L_g}\sum_{i=0}^{b-1} \left[1- \tfrac{\mu_g}{L_g}\right]^i \le \frac{1}{\mu_g}.\nn
\end{align}
Hence, \eqnok{Hessian_Invs_ineq} follows immediately in the view of condition number of $g$ and the triangle inequality for the norms .
\end{proof}

\vgap

Note that the above result show that $H_{yy}$ as the output the subroutine HIA is a biased estimation for $\left[\nabla^2_{yy} g\right]^{-1}$ with bounded variance and the biased term can be decreased by taking more samples of the Hessian. We are now ready to present a stochastic variant of Algorithm~\ref{alg_BG} for solving problem \eqnok{main_prob_st}.

\begin{algorithm} [H]
	\caption{The Bilevel Stochastic Approximation (BSA) Method}
	\label{alg_BSA}
	\begin{algorithmic}

\STATE Input:
$x_0 \in X$, $y_0 \in \bbr^m$ nonnegative sequences $\{\alpha_k\}_{k \ge 0}$, $\{\beta_t\}_{t \ge 0}$, and integer sequences $\{t_k\}_{k \ge 0}$ and $\{b_k\}_{k \ge 0}$.

\STATE Set $\bar y_0 =y_0$.

{\bf For $k=0,1,\ldots$:}

\vgap

{\addtolength{\leftskip}{0.2in}

{\bf For $t=0,1,\ldots, t_k-1$:}

}
\vgap
{\color{black}
{\addtolength{\leftskip}{0.4in}

\STATE Call the stochastic oracle of function $g$ to compute its stochastic partial gradient $G(x_k,y_t,\zeta^{(1)}_t)$ and set
\beq \label{def_yt}
y_{t+1} = y_t- \beta_t \nabla_y G(x_k,y_t,\zeta^{(1)}_t).
\eeq

}

{\addtolength{\leftskip}{0.2in}
{\bf End}

\STATE  Set $\bar y_k=y_{t_k}$. Call both stochastic oracles of functions $f$ and $g$ to compute the stochastic gradient approximation of $f$ given by
\beqa
\tilde \nabla f(x_k;\bar y_k,\w_k) &:=& \nabla_x F(x_k;\bar y_k,\xi_k)- \tilde M(x_k, \bar y_k)\nabla_y F(x_k;\bar y_k,\xi_k),\label{grad_f_st}\\
\tilde M(x_k, \bar y_k) &:=& \nabla_{xy}^2 G(x_k,\bar y_k,\zeta_k^{(2)})H_{yy}(x_k,\bar y_k,\zeta_k^{(3}),\nonumber
\eeqa
where $\w_k =(\xi_k,\zeta^{(1)}_{[t_k]}, \zeta^{(2)}_k, \zeta^{(3)}_k)$ and $H_{yy}$ is computed according to \eqnok{def_Hyy} in Algorithm~\ref{alg_HIA} with $b=b_k$.
Set
\beq \label{def_xk}
x_{k+1} = \arg\min_{u \in X} \left\{\langle \tilde \nabla f(x_k;\bar y_k,\w_k),u \rangle + \frac{1}{2 \alpha_k}\|u-x_k\|^2 \right\},
\eeq

}
}

{\bf End}
	\end{algorithmic}
\end{algorithm}

\vgap

Note that while the above algorithm has the same framework as of Algorithm~\ref{alg_BG}, it has two major differences. First, its inner loop runs essentially a stochastic gradient method to approximately find a solution to the inner problem of \eqnok{main_prob_st}. Second, to compute the gradient estimation in \eqnok{grad_f_st}, we need to estimate the Hessian inverse as well. Hence, more parameters should be appropriately chosen to establish the rate of convergence of the algorithm. To do so, we first present the well-known convergence result of of the inner loop in Algorithm~\ref{alg_BSA} as a variant of the stochastic gradient method.

\begin{lemma}\label{lemma_sgd}
Let $\{y_t\}_{t=0}^{t_k}$ be the sequence generated at the $k$-th iteration of Algorithm~\ref{alg_BSA}. If $\beta_t = 1/[\mu_g(t+2)] \ \ t \ge 0$, then we have
\beq\label{y_cnvrg_st}
\|y_{t_k} - y^*(x_k) \| \le \sqrt{\frac{2}{t_k+2}} \max\left\{\|y_0 - y^*(x_k) \|,\frac{\sigma_{g_{yy}}}{\mu_g}\right\}:=\bar A_k.
\eeq
\end{lemma}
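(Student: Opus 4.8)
The plan is to read the inner loop \eqnok{def_yt} as stochastic gradient descent on the $\mu_g$-strongly convex, $L_g$-smooth map $g(x_k,\cdot)$ with $x_k$ (hence $y^*(x_k)$) frozen, and to bound the mean-square error; the inequality in \eqnok{y_cnvrg_st} is to be read in expectation, so what I would actually establish is $\bbe[\|y_{t_k}-y^*(x_k)\|^2]\le \bar A_k^2$ conditionally on $x_k$. Let $\mathcal{F}_t$ be the $\sigma$-algebra generated by $\zeta^{(1)}_0,\dots,\zeta^{(1)}_{t-1}$ and set $\Delta_t=y_t-y^*(x_k)$. First I would expand $\|\Delta_{t+1}\|^2=\|\Delta_t\|^2-2\beta_t\langle\nabla_y G(x_k,y_t,\zeta^{(1)}_t),\Delta_t\rangle+\beta_t^2\|\nabla_y G(x_k,y_t,\zeta^{(1)}_t)\|^2$ and take $\bbe[\cdot\mid\mathcal F_t]$. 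By the unbiasedness in Assumption~\ref{stochastic_assumption}.d) the cross term becomes $-2\beta_t\langle\nabla_y g(x_k,y_t),\Delta_t\rangle$, and the second moment splits as $\|\nabla_y g(x_k,y_t)\|^2+\bbe[\|\nabla_y G-\nabla_y g\|^2\mid\mathcal F_t]\le\|\nabla_y g(x_k,y_t)\|^2+\sigma_{g_y}^2$, where $\sigma_{g_y}^2$ is the gradient-noise bound of Assumption~\ref{stochastic_assumption}.d) (this is the quantity written $\sigma_{g_{yy}}$ in \eqnok{y_cnvrg_st}).

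Next I would invoke strong convexity (Assumption~\ref{g_assumption}.c)) through $\langle\nabla_y g(x_k,y_t),\Delta_t\rangle\ge\mu_g\|\Delta_t\|^2$, using $\nabla_y g(x_k,y^*(x_k))=0$, so as to keep the full factor of two in the contraction. The goal is to reach, with $a_t=\bbe[\|\Delta_t\|^2]$, a recursion of the clean form $a_{t+1}\le(1-2\beta_t\mu_g)a_t+\beta_t^2\sigma_{g_y}^2$. Substituting $\beta_t=1/[\mu_g(t+2)]$, so $\beta_t\mu_g=1/(t+2)$, this reads $a_{t+1}\le\frac{t}{t+2}\,a_t+\frac{\sigma_{g_y}^2}{\mu_g^2(t+2)^2}$, and I would then prove $a_t\le\frac{2}{t+2}\max\{\|y_0-y^*(x_k)\|^2,\sigma_{g_y}^2/\mu_g^2\}$ by induction on $t$. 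Writing $C=2\max\{\cdot\}$, the base case $t=0$ is immediate since $a_0\le C/2$; the inductive step, after bounding the noise term by $\tfrac{C/2}{(t+2)^2}$ (because $\sigma_{g_y}^2/\mu_g^2\le C/2$), reduces to the elementary inequality $(t+\tfrac12)(t+3)\le(t+2)^2$, which holds for all $t\ge 0$. Taking square roots yields \eqnok{y_cnvrg_st}, and since everything is done conditionally on $x_k$, the quantities $y^*(x_k)$ and $\|y_0-y^*(x_k)\|$ are legitimate constants inside the expectation.

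The hard part will be cleanly obtaining the target recursion, i.e. eliminating the gradient-norm contribution $\beta_t^2\|\nabla_y g(x_k,y_t)\|^2$ to the second moment. If one merely bounds it by $\beta_t^2 L_g^2\|\Delta_t\|^2$ via smoothness (Assumption~\ref{g_assumption}.b)), the contraction factor degrades to $1-\frac{2}{t+2}+\frac{Q_g^2}{(t+2)^2}$, which is \emph{not} below $1-\frac{2}{t+2}$ for the early indices and so destroys the $Q_g$-free constant in \eqnok{y_cnvrg_st}. I would resolve this by combining strong convexity with co-coercivity of the convex smooth $g(x_k,\cdot)$, namely $\langle\nabla_y g(x_k,y_t),\Delta_t\rangle\ge L_g^{-1}\|\nabla_y g(x_k,y_t)\|^2$, so that whenever $\beta_t\le 1/L_g$ (i.e. $t\ge Q_g-2$) the gradient-norm term is absorbed into the descent and the coefficient $1-2\beta_t\mu_g$ is preserved; the finitely many early indices with $\beta_t>1/L_g$ can be folded into the constant with a crude bound. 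This is precisely the second-moment bookkeeping that the ``well-known'' standard $O(1/t)$ analysis of strongly convex stochastic approximation packages, so once it is pinned down the remainder of the argument is the short induction above.
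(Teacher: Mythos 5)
Your high-level architecture is sound — reading \eqnok{y_cnvrg_st} in mean square, the SGD recursion with the bias--variance split of the second moment, and the induction on $a_t=\bbe[\|y_t-y^*(x_k)\|^2]$ (which does work \emph{if} the clean recursion $a_{t+1}\le(1-2\beta_t\mu_g)a_t+\beta_t^2\sigma_{g_y}^2$ is available; your check $(t+\tfrac12)(t+3)\le(t+2)^2$ is correct). For reference, the paper offers no proof at all of this lemma, merely citing it as a well-known stochastic-approximation fact, so your attempt is the only argument on the table. The genuine gap is exactly at the step you flag as hard, and your proposed fix does not close it. Absorbing $\beta_t^2\|\nabla_y g(x_k,y_t)\|^2$ via co-coercivity costs you part of the descent term: when $\beta_t\le 1/L_g$ one has $\beta_t^2\|\nabla_y g\|^2\le\beta_t\langle\nabla_y g(x_k,y_t),\Delta_t\rangle$, so $-2\beta_t\langle\nabla_y g,\Delta_t\rangle+\beta_t^2\|\nabla_y g\|^2\le-\beta_t\langle\nabla_y g,\Delta_t\rangle\le-\beta_t\mu_g\|\Delta_t\|^2$; the contraction degrades to $1-\beta_t\mu_g=1-\tfrac{1}{t+2}$, \emph{not} $1-2\beta_t\mu_g$. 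With that contraction the recursion only telescopes to $a_t\le\bigl(a_0+\sigma_{g_y}^2\mu_g^{-2}\ln(t+1)\bigr)/(t+1)$, an $O(\log t/t)$ bound that does not imply \eqnok{y_cnvrg_st}, and this is tight for the recursion, so no induction can rescue it. The correct tool is Nesterov's coupling inequality $\langle\nabla_y g(x_k,y)-\nabla_y g(x_k,y^*),y-y^*\rangle\ge\tfrac{\mu_g L_g}{\mu_g+L_g}\|y-y^*\|^2+\tfrac{1}{\mu_g+L_g}\|\nabla_y g(x_k,y)-\nabla_y g(x_k,y^*)\|^2$ combined with $\|\nabla_y g(x_k,y)\|\ge\mu_g\|y-y^*\|$: for $\beta_t\le 2/(\mu_g+L_g)$ these give the contraction $(1-\beta_t\mu_g)^2$, and multiplying the recursion by $(t+2)^2$ telescopes to $a_t\le\bigl(a_0+t\sigma_{g_y}^2/\mu_g^2\bigr)/(t+1)^2\le\tfrac{2}{t+2}\max\{a_0,\sigma_{g_y}^2/\mu_g^2\}$, which is exactly the claim.

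The second gap is your treatment of the early indices, and it cannot be repaired as stated. For $t<(Q_g-3)/2$ the stepsize violates $\beta_t\le2/(\mu_g+L_g)$ (a fortiori $\beta_t\le1/L_g$), and in that regime the error does not merely fail to contract — it can grow geometrically, so these iterations cannot be ``folded into the constant'' without introducing $Q_g$-dependence, whereas the constant in \eqnok{y_cnvrg_st} is $Q_g$-free. In fact the lemma is false as written: take $g(x_k,y)=\tfrac{L_g}{2}y_1^2+\tfrac{\mu_g}{2}y_2^2$ with zero gradient noise, $y_0-y^*(x_k)=(\epsilon_0,0)$, and $Q_g=10$; one step with $\beta_0=1/(2\mu_g)$ gives $\|y_1-y^*(x_k)\|=|1-\tfrac{Q_g}{2}|\,\epsilon_0=4\epsilon_0$, while \eqnok{y_cnvrg_st} with $t_k=1$ asserts $\|y_1-y^*(x_k)\|\le\sqrt{2/3}\,\epsilon_0$. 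So no proof exists without modifying the statement — e.g., capping the stepsize as $\beta_t=\min\{1/L_g,\,1/[\mu_g(t+2)]\}$, restricting to $t_k\gtrsim Q_g$, or allowing a $Q_g$-dependent constant. This is a defect of the paper's unproved statement as much as of your argument, but your write-up should identify it rather than assert that a crude bound on finitely many indices suffices.
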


\vgap

We now present the main convergence results for Algorithm~\ref{alg_BSA}.

\begin{theorem}[Convergence results for the BSA algorithm]\label{theom_main_bsa}
Suppose that $\{\bar y_k, x_k \}_{k \ge 0}$ is generated by Algorithm~\ref{alg_BSA}, Assumptions~\ref{f_assumption}, ~\ref{g_assumption}, ~\ref{stochastic_assumption} hold, and stepsizes are chosen such that
\beq\label{alpha_beta_st}
\beta_t = \frac{1}{\mu_g(t+2)} \quad \forall t.
\eeq
\begin{itemize}
\item [a)] Assume that $f$ is strongly convex with parameter $\mu_f>0$, there exists $C_{f_x}>0$ such that for any $\bar x \in X$ and $\bar y \in \bbr^m$, $\|\nabla_x f(\bar x;\bar y)\| \le C_{f_x}$. Then for any $N \ge 1$, we have
\begin{align}
&\bbe[f(\hat x_N;y^*(\hat x_N))]-f^* \le \Gamma_N \sum_{k=0}^{N-1} \frac{\gamma_k}{\Gamma_{k+1}} \left[\frac{1}{2 \alpha_k}\left[\|x^*-x_k\|^2-\|x^*-x_{k+1}\|^2\right]+ \alpha_k\sigma_f^2 + L_f \bar C^2 \alpha_k^2 \right. \nn \\
& \qquad \qquad \qquad \qquad \qquad \qquad + \left. \left(\frac{(1+\alpha_k \mu_f)}{\mu_f} +2L_f \alpha_k^2 \right)C^2\bar A_k^2  +\left(\frac{1}{\mu_f}+L_f \alpha_k^2\right)\hat A_k^2\right], \label{bg_strcvx_st}
\end{align}
where $\bar C$, $\bar A_k$, $\hat A_k$, $\sigma_f$, $\Gamma_k$ are, respectively, defined in \eqnok{def_Mbar}, \eqnok{y_cnvrg_st}, \eqnok{def_hatAk}, \eqnok{delta_expec}, \eqnok{def_Gamma}, and
\beq \label{def_xave2}
\hat x_N = \Gamma_N \sum_{k=1}^N \frac{\gamma_{k-1} x_k}{\Gamma_k}
\eeq
for some $\{\gamma_k\}_{k \ge 1} \in (0,1)$ with $\gamma_0=1$.
\item [b)] If $f$ is convex, $X$ is bounded, and
\beq\label{alpha_beta_st01}
\alpha_k \le \frac{1}{2 L_f}  \quad \forall k \ge 0,
\eeq
we have
\beq \label{bg_cvx_st}
\bbe[f(\bar x_N;y^*(\bar x_N))]-f^* \le \frac{1}{N} \sum_{k=0}^{N-1} \left(\frac{1}{2 \alpha_k}\left[\|x^*-x_k\|^2-\|x^*-x_{k+1}\|^2\right]+D_X (\bar A_k + \hat A_k ) + \alpha_k \sigma_f^2 \right),
\eeq
where $\bar x_N$, is defined in \eqnok{def_xave}.

\item [c)] If $f$ is possibly nonconvex, $X=\bbr^n$ (for simplicity), and \eqnok{alpha_beta_st01} holds, we have
\beq\label{bg_nocvx}
\sum_{k=0}^{N-1} \frac{\alpha_k}{2} (1-2L_f \alpha_k) \bbe[\|\nabla f(x_k;y^*(x_k))\|^2] \le f(x_0)-f^*+ \sum_{k=0}^{N-1} \left[\alpha_k \left(\bar A_k^2+\hat A_k^2\right)+
L_f \alpha_k^2 \left(\bar A_k^2+\sigma_f^2\right) \right].
\eeq
\end{itemize}
\end{theorem}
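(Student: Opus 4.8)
The plan is to mirror the proof of Theorem~\ref{theom_main_bg} at the level of the outer projected-gradient recursion, and to replace every pathwise estimate on the gradient error $\Delta_k \equiv \tilde\nabla f(x_k;\bar y_k,\w_k)-\nabla f(x_k;y^*(x_k))$ by a conditional-bias and a second-moment estimate. Let $\mathcal{F}_k$ be the $\sigma$-algebra generated by all randomness produced before the outer oracle calls at iteration $k$, so that $x_k$ and $\bar y_k$ are $\mathcal{F}_k$-measurable. The first task is to split $\Delta_k$ into its systematic part $\bbe[\Delta_k\mid\mathcal{F}_k]$ and its martingale part $\Delta_k-\bbe[\Delta_k\mid\mathcal{F}_k]$ and to bound each. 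Because the per-iteration inequality \eqnok{BG_proof1} uses only the optimality of the outer update and the $L_f$-smoothness from Lemma~\ref{grad_f_error}.c), it continues to hold pathwise, so the entire stochastic content is funneled into the single term $\langle\Delta_k,u-x_{k+1}\rangle$.

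For the bias I would use the mutual independence of the components of $\w_k=(\xi_k,\zeta^{(1)}_{[t_k]},\zeta^{(2)}_k,\zeta^{(3)}_k)$ together with Assumption~\ref{stochastic_assumption} to factor the conditional expectation of \eqnok{grad_f_st}, obtaining $\bbe[\tilde\nabla f(x_k;\bar y_k,\w_k)\mid\mathcal{F}_k]=\nabla_x f(x_k;\bar y_k)-\nabla_{xy}^2 g(x_k,\bar y_k)\,\bbe[H_{yy}]\,\nabla_y f(x_k;\bar y_k)$. Comparing this against $\bar\nabla f(x_k;\bar y_k)$ (which uses the exact inverse) and invoking the first bound in \eqnok{Hessian_Invs_ineq} isolates the Hessian-bias quantity $\hat A_k\sim C_{g_{xy}}C_{f_y}\mu_g^{-1}((Q_g-1)/Q_g)^{b_k}$; adding the deterministic inexactness bound \eqnok{def_grad_error} then gives $\|\bbe[\Delta_k\mid\mathcal{F}_k]\|\le C\|y^*(x_k)-\bar y_k\|+\hat A_k$, whose expectation is at most $C\bar A_k+\hat A_k$ by Lemma~\ref{lemma_sgd}. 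For the second moment I would bound $\bbe[\|\Delta_k\|^2\mid\mathcal{F}_k]$ by three pieces: a pure oracle variance $\sigma_f^2$ aggregating $\sigma_x^2$ with the variance of the product $\tilde M\nabla_y F$ (built from $\bbe[\|H_{yy}\|^2]$, $\sigma_{g_{xy}}^2$, and $\sigma_y^2$), the inner-loop error $C^2\bar A_k^2$, and the Hessian bias $\hat A_k^2$; the uniform constant $\bar C$ is precisely the resulting bound on $\bbe[\|\tilde\nabla f\|^2]^{1/2}$.

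With these two estimates the three cases follow the deterministic template. Writing $\langle\Delta_k,u-x_{k+1}\rangle=\langle\Delta_k,u-x_k\rangle+\langle\Delta_k,x_k-x_{k+1}\rangle$, I would take conditional expectations for $\mathcal{F}_k$-measurable choices of $u$ (namely $u=\theta_k x^*+(1-\theta_k)x_k$ in part~a), $u=x^*$ in part~b), and $u=x_{k+1}$ in part~c)): the first inner product then has conditional mean $\langle\bbe[\Delta_k\mid\mathcal{F}_k],u-x_k\rangle$, which in the strongly convex case is absorbed by a Young split of weight $\mu_f$ against the strong-convexity term $\tfrac{\mu_f}{2}\|x^*-x_k\|^2$ (this produces the $1/\mu_f$ prefactors in \eqnok{bg_strcvx_st}), while the second inner product is absorbed into the descent term $-\tfrac{1-L_f\alpha_k}{2\alpha_k}\|x_{k+1}-x_k\|^2$ by Young's inequality at the cost of an extra $\|\Delta_k\|^2$ whose expectation is the second-moment bound. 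Taking total expectation yields one-step recursions that, after multiplying by $\gamma_k/\Gamma_{k+1}$ and telescoping via \eqnok{def_Gamma} together with the weighted average \eqnok{def_xave2} (part~a), uniform averaging over $k$ (part~b), or summation of the descent inequality (part~c), reproduce \eqnok{bg_strcvx_st}, \eqnok{bg_cvx_st}, and the bound in part~c); convexity of $f$ and Jensen's inequality convert the averaged iterates into the claimed functional-gap estimates.

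The main obstacle will be the second-moment control of the estimated Jacobian--inverse product $\tilde M\nabla_y F=\nabla_{xy}^2 G(x_k,\bar y_k,\zeta^{(2)}_k)\,H_{yy}(x_k,\bar y_k,\zeta^{(3)}_k)\,\nabla_y F(x_k;\bar y_k,\xi_k)$: unlike the deterministic setting all three factors are random, and Lemma~\ref{lemma_HIA} supplies only the first moment $\bbe[\|H_{yy}\|]\le 1/\mu_g$ and the bias of $\bbe[H_{yy}]$, so one must separately establish a bound on $\bbe[\|H_{yy}\|^2]$ from the Neumann-product form \eqnok{def_Hyy} and then combine it with the independent factors without the cross terms inflating the estimate. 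Carefully exploiting the independence of $\xi_k,\zeta^{(2)}_k,\zeta^{(3)}_k$, so that products of unbiased noises decouple and the truncation bias (decaying geometrically in $b_k$) separates cleanly from the variance, is exactly what permits the split into $\sigma_f^2$, $C^2\bar A_k^2$, and $\hat A_k^2$, and it is the step where the quantities $\bar C$, $\hat A_k$, and $\sigma_f$ defined below must be pinned down.
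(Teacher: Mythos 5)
Your proposal is correct and takes essentially the same route as the paper's proof: the paper splits $\tilde\Delta_k = \Delta_k + \delta_k$ (deterministic inexactness plus oracle noise), bounds $\|\bbe[\delta_k]\|\le \hat A_k$ via Lemma~\ref{lemma_HIA} and $\bbe[\|\delta_k\|^2]\le\sigma_f^2$, and then absorbs $\langle \tilde\Delta_k, u-x_{k+1}\rangle$ with exactly the Young-type splits (weighted by $\mu_f$ in part a), by $D_X$ in part b), and into the descent term in part c)) and the $\Gamma_k$-weighted telescoping plus Jensen that you describe. Two small notes: in the paper, $\bar C$ in \eqnok{def_Mbar} bounds the exact gradient $\|\nabla f(x_k;y^*(x_k))\|$ (using the added assumption $\|\nabla_x f\|\le C_{f_x}$ of part a)) rather than the second moment of $\tilde\nabla f$; and the obstacle you flag about needing $\bbe[\|H_{yy}\|^2]$ is genuine --- the paper's stated variance bound \eqnok{delta_expec} relies on such a bound implicitly without deriving it from \eqnok{def_Hyy}, so your plan is, if anything, more careful on that point.
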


\begin{proof}
We first show part a). Denoting $\tilde \Delta_k \equiv \tilde \nabla f(x_k;\bar y_k,\w_k)-\nabla f(x_k;y^*(x_k))$ and $\delta_k =\tilde \nabla f(x_k;\bar y_k,\w_k)-\bar \nabla f(x_k;\bar y_k)$, we have $\tilde \Delta_k = \delta_k +\Delta_k$, where $\Delta_k$ is defined in the proof of Theorem~\ref{theom_main_bg}. Hence, under Assumptions~\ref{f_assumption}, ~\ref{g_assumption} , ~\ref{stochastic_assumption}, and in the view of Lemma~\ref{lemma_HIA}, we have
\beqa
\bbe[\delta_k] &=& \bbe[\nabla_x F(x_k;\bar y_k,\xi_k) - \nabla_x f(x_k;\bar y_k)]\nn \\
&+& \bbe\left[\nabla^2_{xy} G(x_k;\bar y_k,\zeta^{(2)}_k)H_{yy}(\bar x,\bar y,\zeta^{(3)}_k)\nabla_y F(x_k;\bar y_k,\xi_k) - \nabla^2_{xy} g(x_k;\bar y_k)[\nabla^2_{yy} g(x_k;\bar y_k)]^{-1}\nabla_y f(x_k;\bar y_k)\right]\nn \\
&=& \nabla^2_{xy} g(x_k;\bar y_k)\left(\bbe[H_{yy}(\bar x,\bar y,\zeta^{(3)}_k)]-\nabla^2_{yy} g(x_k;\bar y_k)]^{-1}\right)\nabla_y f(x_k;\bar y_k) \nn \\
&=& \nabla^2_{xy} g(x_k;\bar y_k)B_k\nabla_y f(x_k;\bar y_k), \nn \\
\|\bbe[\delta_k]\| &\le& \frac{C_{g_{xy}} C_{f_y}}{\mu_g} \left(\frac{Q_g-1}{Q_g} \right)^{b_k}:= \hat A_k,\label{def_hatAk} \\
\bbe[\|\delta_k\|^2] &\le& 2\sigma^2_{f_x} + \frac{4}{\mu^2_g} \left(C^2_{g_{xy}}\sigma^2_{f_y}+2 C^2_{f_y}(\sigma^2_{g_{xy}}+2C_{g_{xy}}\right):=\sigma_f^2.
\label{delta_expec}
\eeqa
Similar to \eqnok{BG_proof1} and by setting $u=x^*$, we obtain
\begin{align}
&f(x_{k+1};y^*(x_{k+1}))\le f(x_k;y^*(x_k)) + \langle \nabla f(x_k;y^*(x_k)),x^*-x_k \rangle +\frac{1}{2 \alpha_k}\left[\|x^*-x_k\|^2-\|x^*-x_{k+1}\|^2\right] \nn \\
&\qquad \qquad \qquad \qquad \qquad \qquad \qquad - \frac{(1-L_f \alpha_k)}{2 \alpha_k} \|x_{k+1}-x_k\|^2
 +  \langle \tilde \Delta_k ,x^*-x_{k+1} \rangle. \label{bsa_proof1}
\end{align}
Multiplying both sides by $2\alpha_k$, noting strong convexity of $f$, and re-arranging the terms, we have
\beq \label{bg_strcvx_seq_st0}
(1+\alpha_k \mu_f) \|x^*-x_{k+1}\|^2 \le (1-\alpha_k \mu_f) \|x^*-x_k\|^2 - (1-L_f \alpha_k) \|x_{k+1}-x_k\|^2 + 2\alpha_k \langle \tilde \Delta_k ,x^*-x_{k+1}. \rangle
\eeq
Moreover, observe that
\begin{align}
&\|\nabla f(x_k;y^*(x_k))\| \le C_{f_x}+ \frac{C_{g_{xy}}C_{f_y}}{\mu_g} : = \bar  C, \label{def_Mbar} \\
&\bbe[\|x_{k+1}-x_k\|^2] \le \alpha_k^2 \bbe[\|\tilde \nabla f(x_k;\bar y_k,\w_k)\|^2]
\le 2\alpha_k^2 \left(2\bbe[\|\Delta_k\|^2]+2\bbe[\|\delta_k\|^2]+\|\nabla f(x_k;y^*(x_k))\|^2 \right), \nn \\
& \bbe[\langle \tilde \Delta_k ,x^*-x_{k+1} \rangle] = \langle \Delta_k ,x^*-x_{k+1} \rangle + \langle \bbe[\delta_k] ,x^*-x_k \rangle + \bbe[\langle \delta_k ,x_k-x_{k+1} \rangle] \nn \\
&\le \frac{1}{2\mu_f} \left[\|\Delta_k\|^2 + 2\|\bbe[\delta_k]\|^2 \right]+ \frac{\mu_f}{4}\left[2\|x^*-x_{k+1}\|^2 +\|x^*-x_k\|^2 \right]+\frac{\alpha_k}{2} \bbe[\|\delta_k\|^2] + \frac{1}{2\alpha_k} \|x_{k+1}-x_k\|^2, \label{bnd_delta_st}
\end{align}
where the first inequality follows from \eqnok{grad_f} and boundedness assumptions on the partial derivative of $f$ and $g$, the second inequality follows from the fact that the Euclidean projection is non-expansive, and the last inequality follows from Cauchy-Schwarz inequality. Combining the above observations with \eqnok{def_hatAk}, \eqnok{delta_expec}, and \eqnok{bg_strcvx_seq_st0}, we obtain
\beq \label{bg_strcvx_seq_st}
\|x^*-x_{k+1}\|^2 \le \left(1-\frac{\alpha_k \mu_f}{2}\right) \|x^*-x_k\|^2 + \frac{\alpha_k}{\mu_f} \left[C^2 \bar A_k^2 + 2\hat A_k^2 \right]+\alpha_k^2 \sigma_f^2+2 L_f \alpha_k^3 \left(2C^2 \bar A_k^2+2\hat A_k^2+\bar M^2 \right) .
\eeq
Similarly, we  obtain
\beqa
\bbe[f(x_{k+1};y^*(x_{k+1}))]- f^* &\le& \frac{1}{2 \alpha_k}\left[\|x^*-x_k\|^2-\|x^*-x_{k+1}\|^2\right] + \frac{1}{\mu_f}(C^2 \bar A_k^2 + \hat A_k^2 )+\alpha_k (\sigma_f^2+C^2 \bar A_k^2)\nn \\
&+& L_f \alpha_k^2 \left(2C^2 \bar A_k^2+2\hat A_k^2+\bar M^2 \right).
\eeqa
Multiplying both sides by $\tfrac{\gamma_k \Gamma_N}{\Gamma_{k+1}}$ for some $\{\gamma_k\}_{k \ge 1} \in (0,1)$ with $\gamma_0=1$, summing them up, noting the fact that $\Gamma_N \sum_{k=0}^{N-1} \tfrac{\gamma_k}{\Gamma_{k+1}} = 1$ due to \eqnok{def_Gamma}, (strong) convexity of $f$, and in the view of \eqnok{def_xave2}, we obtain \eqnok{bg_strcvx_st}.

We now show part b). Observe that by boundedness of $X$ and similar to \eqnok{bnd_delta_st}, we obtain
\[
\bbe[\langle \tilde \Delta_k ,x^*-x_{k+1} \rangle] \le D_X (\bar A_k + \hat A_k ) + \alpha_k \sigma_f^2 + \frac{1}{4 \alpha_k} \|x_{k+1}-x_k\|^2.
\]
Hence, taking expectation form both sides of \eqnok{bsa_proof1}, noting the above observation, convexity of $f$, \eqnok{alpha_beta_st},
and after re-arranging the terms, we obtain
\[
\bbe[f(x_{k+1};y^*(x_{k+1}))]-f^* \le \frac{1}{2 \alpha_k}\left[\|x^*-x_k\|^2-\|x^*-x_{k+1}\|^2\right]+D_X (\bar A_k + \hat A_k ) + \alpha_k \sigma_f^2.
\]
Summing up both sides of the above inequality, diving them by $N$, and noting \eqnok{def_xave}, we obtain \eqnok{bg_cvx_st}.

To show part c), note that if $f$ is nonconvex and $X=\bbr^n$, then similar to \eqnok{nocvx_p1} and \eqnok{bnd_delta_st}, we obtain
\beqa
\bbe[f(x_{k+1};y^*(x_{k+1}))] &\le& \bbe[f(x_k;y^*(x_k))] -\frac{\alpha_k}{2}(1-2L_f \alpha_k) \|\nabla f(x_k;y^*(x_k))\|^2 +\alpha_k \left(\|\Delta_k\|^2+\|\bbe[\delta_k]\|^2\right)\nn \\
&& \qquad \qquad \qquad \qquad \qquad \qquad \qquad \qquad \qquad \qquad + L_f \alpha_k^2 \left(\|\Delta_k\|^2+\bbe[\|\delta_k\|^2]\right)\nn \\
&\le& \bbe[f(x_k;y^*(x_k))] -\frac{\alpha_k}{2}(1-2L_f \alpha_k) \|\nabla f(x_k;y^*(x_k))\|^2 +\alpha_k \left(\bar A_k^2+\hat A_k^2\right)\nn \\
&&\qquad \qquad \qquad \qquad \qquad \qquad \qquad \qquad \qquad \qquad + L_f \alpha_k^2 \left(\bar A_k^2+\sigma_f^2\right).\nn
\eeqa
Rest of the proof is similar to that of Theorem~\ref{theom_main_bsa} and hence, we skip the details.
\end{proof}

\vgap

In the next result, we specialize rates of convergence of Algorithm~\ref{alg_BSA} when applied to different class of problems given by \eqnok{main_prob_st}.

\begin{corollary} \label{lemma_main_bsa}
Suppose that $\{\bar y_k, x_k \}_{k \ge 0}$ is generated by Algorithm~\ref{alg_BSA}, Assumptions~\ref{f_assumption},~\ref{g_assumption}, and~\ref{stochastic_assumption} hold. Also, assume that $\beta_k$ is set to \eqnok{alpha_beta_st}.
\begin{itemize}
\item [a)] Assume that $f$ is strongly convex with parameter $\mu_f>0$, $t_k =k$, $b_k = | \lceil \tfrac{1}{2} \log_{1-1/Q_g} k+2 \rceil |$, and
\beq \label{alpha_beta1_st}
\alpha_k = \frac{4}{\mu_f (k+2)} \ \ \forall k \ge 0.
\eeq
Then, for any $N \ge 1$, we have
\begin{align}\label{bg_strcvx_st1}
&\bbe[f(\hat x_N;y^*(\hat x_N))]-f^* \le \frac{\mu_f}{2N(N+1)}\|x^*-x_0\|^2+ \frac{2}{\mu_f^2(N+1)}\Big[C_3^2 + (8L_f+3\mu_f)C_1^2 \nn \\
&\qquad \qquad \qquad  \qquad \qquad \qquad \qquad  +(4L_f+\mu_f)C_2^2 + 4\mu_f\sigma_f^2+ \frac{16 L_f \bar C^2 \ln N}{N}\Big].
\end{align}
where
\beqa
C_1 &=& C \max\left\{\max_{x \in X}\|y_0 - y^*(x) \|, \frac{\sigma_{g_{yy}}}{\mu_g}\right\}, \qquad  C_2 = \frac{C_{g_{xy}} C_{f_y}}{\mu_g},\nn \\
C_3^2 &=& \frac{2(32 L_f+\mu_f)(C_1^2+C_2^2)+16L_f \bar C^2}{\mu_f}+4\sigma_f^2.\label{def_param_st}
\eeqa

\item [b)] If $f$ is convex, $X$ is bounded, an iteration limit $N$ is given, $t_k = \lceil k+1 \rceil$, $b_k = | \lceil \log_{1-1/Q_g} \sqrt{k+1} \rceil |$, and
\beq \label{alpha_beta1_st2}
\alpha_k = \frac{1}{2L_f \sqrt{N+1}} \ \ \forall k = 0, 1, \ldots, N
\eeq
for any given $N \ge 1$, we have
\beq \label{bg_cvx1_st}
\bbe[f(\bar x_N;y^*(\bar x_N))]-f^* \le \frac{1}{\sqrt{N}} \left[2L_f \|x_0-x^*\|^2+3 D_X\left(\sqrt{2}M_1+M_2\right) +\frac{\sigma_f^2}{2 L_f} \right].
\eeq

\item [c)] If $f$ is possibly nonconvex, $X=\bbr^n$ (for simplicity), $\alpha_k$ is set to \eqnok{alpha_beta1_st2}, $t_k = \lceil \sqrt{k+1} \rceil$, and $b_k = | \lceil \tfrac12 \log_{1-1/Q_g} \sqrt{k+1} \rceil |$, we have
\beq\label{bg_nocvx1_st}
\bbe\left[\|\nabla f(x_R;y^*(x_R))\|^2\right] \le \frac{8}{\sqrt{N}} \left[4 L_f[f(x_0;y^*(x_0))-f^*] + 36 M_1^2+ 6 M_2^2 +\sigma_f^2 \right],
\eeq
where the expectation is taken with respect to the integer random variable $R$ uniformly distributed over $\{0,1,\ldots, N-1\}$.
\end{itemize}
\end{corollary}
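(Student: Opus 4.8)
The plan is to read Corollary~\ref{lemma_main_bsa} as three instantiations of the master bounds in Theorem~\ref{theom_main_bsa}: in each case I would (i) check that the prescribed $\alpha_k,t_k,b_k$ meet the theorem's hypotheses, (ii) turn the abstract error quantities $\bar A_k$ and $\hat A_k$ into explicit decaying sequences, and (iii) evaluate the resulting weighted sums. For step (ii), Lemma~\ref{lemma_sgd} gives $\bar A_k=\sqrt{2/(t_k+2)}\max\{\|y_0-y^*(x_k)\|,\sigma_{g_{yy}}/\mu_g\}$, so $C^2\bar A_k^2\le 2C_1^2/(t_k+2)$ with $C_1$ as in \eqref{def_param_st}, while \eqref{def_hatAk} gives $\hat A_k=C_2\,q^{b_k}$ with $q=1-1/Q_g$. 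The key observation is that each $b_k$ is calibrated so that the Hessian-inverse bias $\hat A_k$ decays at exactly the rate (in the relevant power) of the inner-solver error $\bar A_k$: in (a) the choice forces $q^{b_k}\le k^{-1/2}$, hence $\hat A_k^2={\cal O}(1/k)$; in (b), $q^{b_k}\le (k+1)^{-1/2}$, hence $\hat A_k={\cal O}(k^{-1/2})$; and in (c), $q^{b_k}\le (k+1)^{-1/4}$, hence $\hat A_k^2={\cal O}(k^{-1/2})$. In every case this matches the scaling of the corresponding $\bar A_k$ term, so neither source of inexactness dominates the rate.

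For part (a) I would set $\gamma_k=\alpha_k\mu_f/2=2/(k+2)$, which satisfies $\gamma_0=1$ and $\gamma_k\in(0,1)$. Then $1-\gamma_k=k/(k+2)$, and the product in \eqref{def_Gamma} telescopes to the closed form $\Gamma_N=2/(N(N+1))$, yielding the convenient identity $\gamma_k/\Gamma_{k+1}=k+1$. Substituting into \eqref{bg_strcvx_st}, the weighted distance-difference terms $\tfrac{\gamma_k}{\Gamma_{k+1}}\tfrac{1}{2\alpha_k}(\|x^*-x_k\|^2-\|x^*-x_{k+1}\|^2)$ collapse, using the per-step strong-convexity decrease, to the single boundary term $\tfrac{\mu_f}{2N(N+1)}\|x^*-x_0\|^2$ of \eqref{bg_strcvx_st1}. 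The error terms are then handled by $\gamma_k/\Gamma_{k+1}=k+1$ and $\alpha_k=4/(\mu_f(k+2))$: the $\sigma_f^2$, $C^2\bar A_k^2$ and $\hat A_k^2$ contributions each sum (after multiplication by $\Gamma_N$) to ${\cal O}(1/N)$, whereas the $L_f\bar C^2\alpha_k^2$ term, weighted by $(k+1)$, reduces to $\sum_k{\cal O}(1/(k+2))={\cal O}(\ln N)$ and produces precisely the $\ln N/N$ correction in \eqref{bg_strcvx_st1}.

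Parts (b) and (c) are structurally simpler because both use the constant, horizon-dependent stepsize $\alpha_k=1/(2L_f\sqrt{N+1})\le 1/(2L_f)$, so that \eqref{alpha_beta_st01} holds automatically. In part (b) I would start from \eqref{bg_cvx_st}: the distance terms now telescope cleanly to $\tfrac{1}{2\alpha}\|x^*-x_0\|^2=L_f\sqrt{N+1}\,\|x^*-x_0\|^2$, and dividing by $N$ together with $\sqrt{N+1}/N\le\sqrt2/\sqrt N$ gives the first term of \eqref{bg_cvx1_st}; the $D_X(\bar A_k+\hat A_k)$ terms are summed via $\sum_{k=0}^{N-1}(k+\mathrm{const})^{-1/2}\le 2\sqrt N$, and $\alpha_k\sigma_f^2$ gives the $\sigma_f^2/(2L_f)$ contribution. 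Part (c) proceeds identically from the bound in Theorem~\ref{theom_main_bsa}(c): with constant $\alpha_k$ the left side equals $\tfrac{\alpha}{2}(1-2L_f\alpha)N\,\mathbf{E}[\|\nabla f(x_R)\|^2]$ for $R$ uniform, so after lower-bounding $1-2L_f\alpha$ and dividing, the $f(x_0)-f^*$ term, the $L_f\alpha^2\sigma_f^2 N$ term, and the $\sum_k\alpha(\bar A_k^2+\hat A_k^2)$ sums (again $\sum_k k^{-1/2}\le 2\sqrt N$) combine into the $1/\sqrt N$ rate of \eqref{bg_nocvx1_st}.

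The series estimation is routine; the genuinely delicate step, which I expect to be the main obstacle, is the distance-term bookkeeping in part (a). Since the weights $\tfrac{\gamma_k}{\Gamma_{k+1}}\tfrac{1}{2\alpha_k}$ grow like $k^2$, a naive Abel summation leaves an increasing-weight residual of the form $\sum_k(k+1)\|x^*-x_k\|^2$ that does not obviously vanish; forcing it to collapse to the single $\|x^*-x_0\|^2$ term requires exploiting the per-step strong-convexity decrease so that every intermediate distance coefficient becomes non-positive and can be discarded. Tracking the exact constants through this cancellation, and correctly isolating the logarithmic factor coming from the $\alpha_k^2$ term, is where the argument demands the most care.
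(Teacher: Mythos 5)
Your treatment of parts (b) and (c), and your calibration of the decay of $\bar A_k$ and $\hat A_k$ against the choices of $t_k$ and $b_k$, are correct and coincide with the paper's argument. The gap is in part (a), precisely at the step you yourself flag as delicate. You propose to handle the growing-weight distance sum by ``exploiting the per-step strong-convexity decrease so that every intermediate distance coefficient becomes non-positive,'' \emph{after} substituting into \eqnok{bg_strcvx_st}. This cannot work as described: in the statement of Theorem~\ref{theom_main_bsa} the distance terms enter as the plain differences $\tfrac{1}{2\alpha_k}\left[\|x^*-x_k\|^2-\|x^*-x_{k+1}\|^2\right]$, with no contraction factor left to exploit --- the strong convexity has already been spent inside the theorem's proof, where (see \eqnok{bnd_delta_st}) it absorbs the terms $\tfrac{\mu_f}{4}\left[2\|x^*-x_{k+1}\|^2+\|x^*-x_k\|^2\right]$ produced by Young's inequality on $\langle \tilde \Delta_k , x^*-x_{k+1}\rangle$; this is exactly why the error terms in \eqnok{bg_strcvx_st} carry $1/\mu_f$ factors. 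Even if you re-enter the theorem's proof and keep whatever contraction survives those estimates, with the splits of \eqnok{bnd_delta_st} the intermediate Abel coefficients come out of order $+\mu_f k/2$, still positive; forcing them non-positive would mean re-tuning all the inner-product estimates, i.e., redoing the theorem rather than instantiating it.

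The paper's mechanism is different, and the form of the target bound shows that it must be: \eqnok{bg_strcvx_st1} contains a term proportional to $C_3^2/(N+1)$, and that term \emph{is} the non-vanishing Abel residual, so any argument in which the residual is discarded proves a bound of a different form. Concretely, the paper first returns to the distance recursion \eqnok{bg_strcvx_seq_st} --- an intermediate inequality in the proof of Theorem~\ref{theom_main_bsa}, not part of its statement --- which with $\alpha_k=4/(\mu_f(k+2))$ and the prescribed $t_k, b_k$ reads $\|x^*-x_{k+1}\|^2 \le (1-\gamma_k)\|x^*-x_k\|^2+\gamma_k^2 C_3^2/\mu_f^2$ for $\gamma_k=2/(k+2)$, and iterates it to get the iterate rate $\|x^*-x_k\|^2 \le 4C_3^2/(\mu_f^2(k+1))$. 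Only then does it perform the Abel summation you describe: the boundary weight gives $\tfrac{\mu_f}{4}\|x^*-x_0\|^2$, each positive intermediate coefficient $\tfrac{\mu_f(k+1)}{4}$ is paired with $\|x^*-x_k\|^2={\cal O}\bigl(C_3^2/(\mu_f^2 k)\bigr)$ so that each term is ${\cal O}(C_3^2/\mu_f)$, and the residual of order $C_3^2 N/\mu_f$, multiplied by $\Gamma_N=2/(N(N+1))$, yields the $C_3^2$ term of \eqnok{bg_strcvx_st1}. The missing idea in your proposal is this preliminary ${\cal O}(1/k)$ bound on $\|x^*-x_k\|^2$; once it is added, your Abel computation (with positive coefficients, no sign-flipping) goes through and produces the stated constants.
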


\begin{proof}
First, note that by \eqnok{bg_strcvx_seq_st}, \eqnok{alpha_beta1_st}, \eqnok{def_param_st}, and choices of $t_k$ and $b_k$, we have
\[
\|x^*-x_{k+1}\|^2 \le (1-\gamma_k) \|x^*-x_k\|^2 + {\gamma_k^2 M_3^2}{\mu_f^2},
\]
where $\gamma_k =2/(k+2)$ and $M_3$ is defined in \eqnok{def_param_st}. Dividing both sides of the above inequality by $\Gamma_{k+1}$, summing them up and noting \eqnok{def_Gamma}, we obtain
\beq \label{bg_strcvx_seq_st2}
\|x^*-x_N\|^2 \le \frac{4 M_3^2}{\mu_f^2(N+1)} \qquad \forall N \ge 1.
\eeq
Second, noting the above bound on the generated sequences, \eqnok{alpha_beta1_st}, and choice of $\gamma_k$ same as above, we have
\begin{align}
&\sum_{k=0}^{N-1} \frac{\gamma_k}{2\alpha_k \Gamma_{k+1}} \left[\|x^*-x_k\|^2-\|x^*-x_{k+1}\|^2\right]\nn \\
&= \frac{\mu_f}{4}\|x^*-x_0\|^2+ \sum_{k=1}^{N-1} \left(\frac{\gamma_k}{2\alpha_k \Gamma_{k+1}} - \frac{\gamma_{k-1}}{2\alpha_{k-1} \Gamma_k} \right) \|x^*-x_k\|^2- \frac{\gamma_{N-1}}{2\alpha_N \Gamma_N}\|x^*-x_N\|^2\nn \\
&\le \frac{\mu_f}{4}\|x^*-x_0\|^2+ \frac{M_3^2 N}{\mu_f}, \nn \\
&\sum_{k=0}^{N-1} \frac{\gamma_k}{\Gamma_{k+1}} \left(\frac{(1+\alpha_k \mu_f)}{\mu_f} +2L_f \alpha_k^2 \right)C^2\bar A_k^2  \le \frac{(8L_f+3\mu_f)M_1^2 N}{\mu_f^2}, \nn \\
&\sum_{k=0}^{N-1} \frac{\gamma_k}{\Gamma_{k+1}} \left(\frac{1}{\mu_f}+L_f \alpha_k^2\right)\hat A_k^2 \le \frac{(4L_f+\mu_f)M_2^2 N}{\mu_f^2}, \nn \\
&\sum_{k=0}^{N-1} \frac{\gamma_k \alpha_k \sigma_f^2}{\Gamma_{k+1}} \le \frac{4\sigma_f^2 N}{\mu_f}, \qquad \qquad \qquad
\sum_{k=0}^{N-1} \frac{\gamma_k \alpha_k^2 L_f \bar M^2}{\Gamma_{k+1}} \le \frac{16 L_f \bar M^2 \ln N}{\mu_f^2},
\end{align}
which together with \eqnok{bg_strcvx_st}, imply \eqnok{bg_strcvx_st1}.

Third, noting \eqnok{alpha_beta1_st}, \eqnok{def_hatAk}, \eqnok{y_cnvrg_st}, and with the choices of $t_k =k+1$ and $b_k = | \lceil \log_{1-1/Q_g} \sqrt{k+1} \rceil |$, we have
\begin{align}
&\sum_{k=0}^{N-1} \frac{1}{2 \alpha_k}\left[\|x^*-x_k\|^2-\|x^*-x_{k+1}\|^2\right] = L_f \sqrt{N+1}\left[\|x^*-x_0\|^2-\|x^*-x_N\|^2\right] \le 2 L_f \sqrt{N} \|x^*-x_0\|^2, \nn \\
&\sum_{k=0}^{N-1} \bar A_k \le 3 \sqrt{2 N} M_1, \nn \\
&\sum_{k=0}^{N-1} \hat A_k \le M_2 \sum_{k=0}^{N-1} \frac{1}{\sqrt{k+1}} \le 3 M_2 \sqrt{N}, \qquad \qquad \sum_{k=0}^{N-1} \alpha_k \le \frac{\sqrt{N}}{2 L_f}, \nn
\end{align}
which together with \eqnok{bg_cvx_st}, imply \eqnok{bg_cvx1_st}.

Finally, noting \eqnok{alpha_beta1_st2}, the choices of $t_k = \lceil \sqrt{k+1} \rceil$, and $b_k = | \lceil 0.5 \log_{1-Q_g^{-1}} \sqrt[4]{k+1} \rceil |$, we have
\begin{align}
&\sum_{k=0}^{N-1} \frac{\alpha_k}{2} (1-2L_f \alpha_k)  \ge \frac{\sqrt{N}}{32 L_f}, \qquad \qquad \qquad
\sum_{k=0}^{N-1} \left[\alpha_k(1+L_f\alpha_k)\bar A^2_k \right] \le \frac{9 \max\{\|y_0 - y^*(x_k)\|^2, \tfrac{\sigma^2_{g_{yy}}}{\mu_g^2} \}}{L_f},\nn \\
&\sum_{k=0}^{N-1} \alpha_k \hat A^2_k \le \frac{3}{2L_f} \left(\frac{L_{g_{xy}} C_{f_y}}{\mu_g}\right)^2 \qquad \qquad \quad
\sum_{k=0}^{N-1} L_f \sigma_f^2 \alpha_k^2 \le \frac{\sigma_f^2}{4 L_f}.
\end{align}
Rest of the proof is similar to that of Lemma~\ref{lemma_main_bsa}.c).
\end{proof}

We make a few remarks about the above results. First, note that \eqnok{bg_strcvx_st1} and \eqnok{def_param_st} imply that sample complexities of Algorithm~\ref{alg_BSA} for finding an $\epsilon$ solution of problem \eqnok{main_prob_st} are bounded by
\beqa \label{complex_bnd_strcvx_st}
&&SGC(f,\epsilon) = {\cal O} \left(\sqrt{\frac{\mu_f \|x^*-x_0\|^2}{\epsilon}} + \frac{M_3^2}{\mu_f \epsilon}\right), \qquad \qquad SGC(g,\epsilon)= SGC(f,\epsilon)^2,\nn \\
&& SHC(g,\epsilon) = SGC(f,\epsilon)\log SGC(f,\epsilon),
\eeqa
when $f$ is strongly convex. Furthermore, when $f$ is only convex, the above bounds are change to
\beqa \label{complex_bnd_cvx_st}
&&SGC(f,\epsilon) = {\cal O} \left(L_f D_X^2+ \frac{M_3^2}{L_f}\right)\frac{1}{\epsilon^2}, \qquad  \qquad SGC(g,\epsilon)= SGC(f,\epsilon)^2, \nn \\ &&SHC(g,\epsilon) = SGC(f,\epsilon)\log SGC(f,\epsilon)
\eeqa
due to \eqnok{bg_cvx1_st}. Finally, when $f$ is possibly nonconvex, \eqnok{bg_nocvx1_st} implies that sample complexities of Algorithm~\ref{alg_BSA} are in the order of
\beqa \label{complex_bnd_nocvx_st}
&&SGC(f,\epsilon) = {\cal O} \left(L_f[f(x_0)-f^*]+M_3^2\right)\frac{1}{\epsilon^2}, \qquad \qquad SGC(g,\epsilon)= SGC(f,\epsilon)^{\tfrac32}, \nn \\ &&SHC(g,\epsilon) = SGC(f,\epsilon)\log SGC(f,\epsilon).
\eeqa

\vgap

{\color{black}
To the best of our knowledge the above results seem to be the first finite-sample complexity bounds for the stochastic bilevel programming problem. Moreover the above bounds in \eqnok{complex_bnd_strcvx_st} and \eqnok{complex_bnd_cvx_st} for $SGC(f,\epsilon)$ when $f$ is (strongly) convex match the well-known sample complexity results for the class of three stage stochastic optimization problem (see e.g., \cite{sn04}).
}

\section{Concluding Remarks}\label{concld_section}
We have presented iterative algorithms for solving bilevel optimization problems where the inner problem is strongly convex. Under mild assumptions on the partial derivatives of both objective function, we also provide finite-time convergence analysis of proposed algorithm and established its iteration complexity under different convexity assumptions on the outer objective function. Using an acceleration scheme, we recover (nearly) optimal iteration complexity of the single level problems for the bilevel problem. Moreover, we have developed a randomized stochastic approximation algorithm that work in the stochastic setting where both objective functions are given in the form of expectations. Convergence analysis and sample complexity bounds of this algorithm, are also provided. To the best of our knowledge, this is the first time that iterative algorithms with established iteration (sample) complexities are presented for solving bilevel optimization problems.

\newcommand{\noopsort}[1]{} \newcommand{\printfirst}[2]{#1}
  \newcommand{\singleletter}[1]{#1} \newcommand{\switchargs}[2]{#2#1}

\end{document}